\newcommand{\R}{{\mathbb R}}
\newcommand{\re}{{\mathbb R}}
\newcommand{\ren}{{\mathbb R}^N}
\newcommand{\be}[1]{\begin{equation}\label{#1}}
\newcommand{\ee}{\end{equation}}
\newcommand{\prf}{\par\smallskip\noindent{\sl Proof. \/}}
\newcommand{\finprf}{\unskip\null\hfill$\;\square$\vskip 0.3cm}
\newenvironment{proof}{\prf}{\finprf}
\newtheorem{theorem}{Theorem}[section]
\newtheorem{lemma}{Lemma}[section]
\newtheorem{proposition}[theorem]{Proposition}
\newtheorem{remark}[theorem]{Remark}
\newtheorem{definition}{Definition}[section]
\newcommand{\dint}{\displaystyle\int}
\numberwithin{equation}{section}
\def\qed{\,\unskip\kern 6pt \penalty 500
\raise -2pt\hbox{\vrule \vbox to8pt{\hrule width 6pt
\vfill\hrule}\vrule}\par}
\definecolor{darkblue}{rgb}{0.05, .05, .65}
\definecolor{darkgreen}{rgb}{0.1, .65, .1}
\definecolor{darkred}{rgb}{0.8,0,0}
\begin{document}
\title{\textbf{ Symmetrization for fractional elliptic \\ and parabolic equations and \\ an isoperimetric application}\\[7mm]}

\author{\Large  Yannick Sire\footnote{Universit\'e Aix-Marseille,  I2M, Centre de Math\'ematique et Informatique, Technop\^ole de Chateau-Gombert, Marseille, France, \
E-mail:~{\tt yannick.sire@univ-amu.fr}}\,,
\quad Juan Luis V\'azquez
\footnote{Departamento de Matem\'aticas, Universidad Aut\'onoma de Madrid, 28049 Madrid, Spain, \
E-mail: {\tt juanluis.vazquez@uam.es}}\,,
\\[8pt] \Large  and \ Bruno Volzone\footnote{Dipartimento di Ingegneria, Università degli Studi di
Napoli ``Parthenope'', 80143  Italia. \    E-mail: {\tt bruno.volzone@uniparthenope.it}}
 }

\date{} 

\maketitle

\begin{abstract}
We develop  further  the theory of symmetrization of fractional Laplacian operators contained in recent works of two of the authors. The theory leads to optimal estimates in
the form of concentration comparison inequalities for both  elliptic and parabolic equations.
In this paper we extend the theory for the so-called \emph{restricted} fractional
Laplacian defined on a bounded domain $\Omega$ of $\ren$ with zero Dirichlet conditions outside of $\Omega$. As an application, we derive an original proof of the corresponding fractional Faber-Krahn inequality. We also provide a more classical variational proof of the inequality.
\end{abstract}

\

\setcounter{page}{1}
\section{Introduction}\label{sec.intro}

In this paper we will develop further the theory of symmetrization for fractional Laplacian operators
initiated in the papers \cite{dBVol, VazVol1, VazVol2}, both in the elliptic and the parabolic setting, by extending it to a natural  version of the fractional Laplacian defined on a bounded domain $\Omega$ of $\ren$ which is known as the restricted fractional Laplacian. This research direction combines classical themes in the study of nonlinear elliptic and parabolic equations, like symmetrization and accretive operators,  with the recent interest in nonlocal versions of the diffusion operators, specially the fractional Laplacians. As an application of the obtained comparison results, we derive an original proof of the Faber-Krahn inequality (FKI) for such operators defined on the bounded domain $\Omega$.

Before entering into the description of our results, we review in this introduction the necessary information about symmetrization, the elliptic-to-parabolic
technique used to generate evolution semigroups, the precise definition of the fractional Laplacian operators and the relation among these topics. This constitutes a sort of review part of this paper.

\medskip

\noindent {\sc Definitions of fractional Laplacians on bounded domains.} When working in the whole space domain $\ren$ there are several equivalent definitions
of the fractional Laplacian operator $(-\Delta)^{\sigma/2}$, $0<\sigma<2$, classical references being \cite{Landkof72, Stein70}. The interest in these operators
has a long history in Probability since the fractional Laplacian operators of the form $(-\Delta)^{\sigma/2}$  are infinitesimal generators of stable L\'{e}vy
processes, see \cite{Applebaum, Bertoin, Valdinoc}. Further motivation and references on the literature are given for instance in \cite{BV2012, VazVol1}. A particular definition that has been convenient for symmetrization purposes defines the operator for every given $0<\sigma<2$ as the trace of a suitable Dirichlet-Neumann problem via an extended potential function $w$ that solves an elliptic equation in a upper half-space in ${\mathcal{H}}^+=\re^{N}\times(0,\infty)\subset
\re^{N+1}$. This is usually called Caffarelli- Silvestre extension \cite{CaffSilv}. It allows to reduce nonlocal problems involving $(-\Delta)^{\sigma/2}$ to suitable local problems (actually, a degenerate-singular elliptic equation),  defined in one more space dimension.

When we work on a bounded domain $\Omega\subset\ren$,  things complicate because there are several options for defining the fractional Laplacian operator $(-\Delta)^{\sigma/2}$. Two of
them appear often in the recent literature. In our previous work  \cite{VazVol2} on symmetrization  we have followed one of these approaches to define the
fractional Laplacian as the Dirichlet-to-Neumann map, through an extended potential function defined in a cylinder  ${\cal C}=\Omega \times (0,\infty)\subset
\re^{N+1}$,
as was proposed in  \cite{CT} and \cite{Colorado}. Zero values are assigned on the lateral boundary of $\cal C$. We call this operator the {\sl spectral
version}
of the fractional Laplacian on $\Omega$.  Let us call this operator ${\cal L}_1$ ($\cal L$ stands for the Laplacian). This setting allowed us to derive in
\cite{VazVol2} the desired symmetrization results, which extend the standard
symmetrization theory applied to elliptic and parabolic equations driven by the standard Laplace operator. But let us recall that there are remarkable
restrictions on their validity in the form of conditions on the nonlinearities that are allowed in the equations.

In this paper we will take the second usual approach to define  $(-\Delta)^{\sigma/2}$, which seems to be more natural in many applications. It consists in
keeping the definition of fractional
Laplacian in $\ren$ but asking it to act on the null-extensions to $\ren$ of functions $u(x)$ defined in $\Omega$. So in principle we can use the most common
formulation with a hyper-singular kernel
\begin{equation}
(-\Delta)^{\sigma/2}f(x)=c(N,\sigma)\int_{\ren} \frac{f(x)-f(y)}{|x-y|^{N+\sigma}}\,dy\,,
\end{equation}
on the condition that $f(y)=0$ for $y\not\in \Omega$. Let us call this operator ${\cal L}_2$. This option has been called the {\sl restricted Laplacian} on a
bounded domain \cite{SerVal,
BonfSireVaz2014},  but we will prefer the name {\sl natural fractional Laplacian} with Dirichlet conditions in this paper. The discussion on the relations and
differences between the two types of operators on bounded domains is currently being investigated by several authors. Thus, Musina and Nazarov
\cite{MusinaNazarov} use the name \sl fractional Laplacian with Navier  conditions \rm for the spectral version, and \sl fractional Laplacian with Dirichlet
conditions \rm for the restricted version.

Here, we want to extend to operator ${\cal L}_2$  the symmetrization theory we had developed for ${\cal L}_1$ in
the papers \cite{dBVol} and  \cite{VazVol1}. This has an independent interest since there are subtle differences between the two operators, see
\cite{BonfSireVaz2014, BV2015}.

 \medskip

\noindent {\sc Symmetrization.} Symmetrization is a very ancient geometrical idea that is used nowadays as an efficient tool of obtaining a priori estimates for
the solutions of different partial differential  equations,  notably  those of elliptic and parabolic type. Since the topic is so well known let us only recall
some facts that are relevant here. Symmetrization techniques appear in classical works like \cite{MR0046395, PS1951}. The application of  Schwarz  symmetrization
to obtaining a priori estimates for elliptic problems is already described in \cite{Wein62} and  \cite{Maz}.  The  standard  elliptic result refers to the
solutions of an equation of the form
$$
Lu=f,  \qquad Lu=-\sum_{i,j} \partial_i(a_{ij}\partial_j u)\,,
$$
posed in a bounded domain $\Omega\subseteq \ren$;  the coefficients $\{a_{ij}\}$ are assumed to be bounded, measurable and satisfy the usual ellipticity
condition; finally, we take zero Dirichlet boundary conditions on the boundary $\partial\Omega$. The  classical analysis introduced by Talenti \cite{Talenti1,
Talenti3} leads to pointwise comparison between the symmetrized version (more precisely the spherical decreasing rearrangement)  of the actual solution of the
problem $u(x)$ and the radially symmetric solution $v(|x|)$ of some radially symmetric model problem which is posed in a ball with the same volume as $\Omega$.
Sharp a priori estimates for the solutions are then derived. Extensions of this method to more general problems or related equations have led to a copious
literature.

\medskip

\noindent {\sc Elliptic approach to parabolic problems.} For parabolic problems this pointwise comparison fails and the appropriate concept is comparison of
concentrations, cf. Bandle \cite{Bandle, Band2} and Vázquez \cite{Vsym82}. The latter considers the evolution problems of the form
\begin{equation}\label{evol.pbm}
\partial_t u=\Delta A(u), \quad u(0)=u_0,
 \end{equation}
where $A$ a monotone increasing real function and $u_0$ is a suitably given initial datum which is assumed to be integrable. For simplicity the problem was
posed
for $x\in \ren$,  but bounded open sets can be used as spatial domains. The
novel idea of the paper was to use the famous Crandall-Liggett Implicit Discretization theorem \cite{CL71} to reduce the evolution problem to a sequence of
nonlinear elliptic problems of the iterative form
\begin{equation}
- h\,\Delta A(u(t_k))+u(t_k)=u(t_{k-1}),\quad k=1,2, \cdots,
\end{equation}
where $t_k=kh$, and $h>0$  is the time step per iteration. Writing $A(u)=v$, the resulting chain of elliptic problems can be written in the common form
\begin{equation}\label{ell.eq}
h \,L v + B(v)= f\,, \quad B=A^{-1}.
\end{equation}
General theory of these equations, cf.  \cite{BBC1975}, ensures that the solution map: $$T:f\mapsto u=B(v)$$ is a contraction in some Banach space, which
happens
to be $L^1(\Omega)$. Note that the constant $h>0$ is not essential, it can be put to 1 by scaling. In that context, the symmetrization
result can be split into two results:

(i) the first one applies to rearranged right-hand sides and solutions.  It says that
if two r.h.s. functions $f_1, f_2$, are rearranged and satisfy a concentration comparison of the form $f_1 \prec f_2$, then the same applies to the solutions,
in
the form $B(v_1)\prec B(v_2)$.\footnote{For the definition of  the order relation $\prec$, see Section \ref{sec.prelim}.}

(ii) The second result  aims at comparing the solution $v$ of equation \eqref{ell.eq} with  a non-rearranged function $f$  with the solution $\tilde v$
corresponding to $f^\#$, the  radially decreasing rearrangement of $f$. We obtain that  $\tilde v$ is a rearranged function and $ B(v^\#)\prec
B(\tilde v)$, i.\,e., $B(v)$ is less concentrated than $B(\tilde v)$.

This precise pair of comparison results can be combined to obtain similar results along the whole chain of iterations $u(t_k)$ of the evolution process, if
discretized as indicated above. This allows in turn to conclude the symmetrization theorems (concentration comparison and comparison of $L^p$ norms)
for the evolution problem \eqref{evol.pbm}. This approach can be used in many different situations. In particular, it will be used below.

\medskip

\noindent {\sc Symmetrization for equations with fractional operators.} The study of elliptic and parabolic equations involving nonlocal operators, usually of fractional type, is currently the subject of great attention. Symmetrization techniques were first applied to PDEs involving fractional Laplacian operators in the paper \cite{dBVol}, where the linear elliptic case is studied:
\begin{equation}
(-\Delta)^{\sigma/2}v=f,
\end{equation}
The paper uses an interesting technique of Steiner symmetrization of the extended problem, based on the Caffarelli-Silvestre extension for the definition of $\sigma$-Laplacian operator.
 In \cite{VazVol1} the last two authors of the present paper \normalcolor were able to improve on that progress and combine it with the  parabolic ideas of \cite{Vsym82} to establish
the relevant comparison theorems based on symmetrization for linear and  nonlinear parabolic equations.  To be specific, they dealt with equations of the form
\begin{equation}\label{nolin.parab}
\partial_t u +(-\Delta)^{\sigma/2}A(u)=f, \qquad 0<\sigma<2\,.
\end{equation}
Following the known theory for the standard Laplacian, the nonlinearity $A$ is an increasing real function such that $A(0)=0$, and we accept some extra
regularity conditions as needed, like $A$ smooth with $A'(u)>0$ for all $u>0$. The problem was posed in the whole space $\ren$. Special attention was
paid to  cases of the form $A(u)=u^m$ with $m>0$; the equation is then called the Fractional Heat Equation (FHE) when $m=1$, the Fractional Porous Medium
Equation (FPME)  if $m>1$, and the Fractional Fast Diffusion Equation (FFDE) if $m<1$. Let us recall that the linear equation \ $\partial_t u
+(-\Delta)^{\sigma/2}u=0$  is a model of so-called anomalous diffusion, a much studied topic in
physics.

The results of \cite{dBVol} and    \cite{VazVol1} include a comparison of concentrations, in the form \ $v^\#\prec \tilde v $, that parallels the result that
holds in the standard Laplacian case; note however that no pointwise comparison is obtained, so the result looks a bit like the parabolic results of the standard
theory  mentioned above.
Paper \cite{VazVol1} considers both problems posed in the whole space and on a bounded domain. In the latter case the spectral fractional Laplacian is always
chosen.

\section{Outline of results of the present paper}\label{sec.outline}

We are interested in considering the application of such symmetrization techniques to linear or nonlinear elliptic and parabolic equations with  fractional
Laplacian
operators posed on a bounded domain, when the natural (i.\,e., restricted) version of fractional Laplacian is used. We denote the operator by ${\cal L}_2$.

\medskip

\noindent {\sc Parabolic equations.} To be specific, we want to treat evolution equations of the  form
\begin{equation}\label{nolin.parab}
\partial_t u +(-\Delta)^{\sigma/2}A(u)=f, \qquad 0<\sigma<2\,.
\end{equation}
We want to consider as nonlinearity $A$ an increasing real function such that $A(0)=0$, and we may accept some other regularity conditions as needed, like $A$
smooth with $A'(u)>0$ for all $u>0$. The problem is posed in $\Omega$, a bounded subset of $\ren$ with smooth boundary.  The parabolic result is developed in
Section \ref{sec.par} and has to be compared with the results of papers \cite{VazVol1, VazVol2}. We will focus on the linear case  $A(u)=cu$. This is the case
that is needed in the isoperimetric application  that we study in Section \ref{sect.fk}.

\medskip

\noindent {\sc Elliptic equations.} The application of the method of implicit time discretization leads to the nonlinear equation of elliptic type
\begin{equation}\label{nolin.ell}
h\,(-\Delta)^{\sigma/2}v+B(v) =f
\end{equation}
posed again in the whole space $\Omega  \subset \ren$ with zero Dirichlet boundary conditions; $h>0$ is a non-essential constant, and the nonlinearity $B$ is
the
inverse function to the monotone function $A$ that appears in the parabolic equation \eqref{nolin.parab}. The elliptic results are developed in Sections
\ref{sect.ell} and
\ref{sec.ell2} and have to be compared with the results of papers \cite{dBVol, VazVol1} where the equation is posed either in $\ren$ or in $\Omega$ with operator
${\cal L}_1$. Note that the elliptic results we get cover the standard linear case where the term $B(v)$ disappears and we set $h=1$.

\medskip

\noindent {\sc A geometrical application. The Faber-Krahn inequality.} As an application, we  will use the symmetrization results to prove the Faber Krahn
inequality for the  fractional Laplacian operator on a bounded domain in both versions considered above.  We recall that the FKI is a classical eigenvalue
inequality, due separately to Faber \cite{F} and Krahn \cite{K}, based on a conjecture by
Rayleigh in 1877, that can be stated as follows:

{\sl Let $\Omega$ be a bounded domain in $\ren$ and let
$B$ be the ball centered at the origin with $Vol(\Omega) = Vol(B)$. Let $\lambda_1(\Omega)$
be the first eigenvalue of the Laplacian operator, with zero Dirichlet boundary conditions.
Then $\lambda_1(\Omega)\ge \lambda_1(B)$, with equality if and only if $\Omega=B$
almost everywhere. }

This is a classical result in the Calculus of Variations and proofs can be found in the classical books like Chavel's \cite{Chavel}, see a recent proof in
\cite{BF2012}. The question we want to address here is: will the result  also hold for the usual versions of the fractional Laplacian operator
$(-\Delta)^{\sigma/2}$ defined on bounded domains of $\ren$ with zero Dirichlet boundary conditions?

The answer is immediate in the case of the so-called {\sl
spectral version of the Dirichlet fractional Laplacian}, ${\cal L}_1$, since its eigenvalues, $\lambda_k(\mathcal{L}_{1}; \Omega)$,  are directly related to
those of the
standard Laplacian, $\lambda_k((-\Delta);\Omega)$, by the formula:
\begin{equation}\label{spect.se.sigma}
\lambda_k(\mathcal{L}_{1}; \Omega)=(\lambda_k((-\Delta);\Omega))^{\sigma/2}.
\end{equation}
However, no simple relation like this one happens for the natural fractional Laplacian with the definition restricted type, ${\cal L}_2$. In Section
\ref{sect.fk} we  will use our comparison results to present an original derivation of the fractional FKI. It does not make use of any variational
interpretation, but only of some properties of the evolution process. The FKI can also be studied either by probabilistic or variational methods. For
completeness, we also present a variational derivation, see more details in the mentioned section.

\medskip

\noindent {\sc Preliminary material and notation.} In the paper we will use standard concepts and notation on symmetrization as fixed in \cite{VazVol1}. We
gather the main facts that we did not present here in the first appendix for the  reader's convenience.

\section{Elliptic Problem with lower-order term}\label{sect.ell}
\setcounter{equation}{0}

The case of the natural fractional Laplacian ${\cal L}_2$ will occupy our attention in this paper. We start our analysis by the following nonlocal elliptic
problem with Dirichlet condition:
\begin{equation} \label{eq.1}
\left\{
\begin{array}
[c]{lll}%
\left(  -\Delta\right)^{\sigma/2}v+  B(v)=f\left(  x\right)   &  & in\text{ }%
\Omega,\\[6pt]
v=0 &  & in\text{ }\R^{N}\setminus\Omega,
\end{array}
\right. %
\end{equation}
\normalcolor
where $\Omega$ is an open bounded set of ${\mathbb{R}}^{N}$,  $\sigma\in(0,2)$ and $f$ is an integrable function defined in $\Omega$. We are interested in
treating the case of a bounded domain $\Omega$ with the natural version of the fractional Laplacian. Exceptionally, $\Omega$ may be ${\mathbb{R}}^{N}$, but this
case was treated in \cite{VazVol1}. We assume that the nonlinearity is given by a function $B:\R_{+}\rightarrow\R_{+}$ which is  smooth and monotone increasing
with $B(0)=0$ and $B'(v) >0$. It is
not essential to consider negative values for our main results, but the general theory can be done in that greater generality, just by assuming that $B$ is
extended to a function $B:\R_{-}\rightarrow\R_{-}$ by
symmetry, $B(-v)=-B(v)$.  Note that we have changed a bit the notation with respect to equation \eqref{nolin.ell} in the introduction, by eliminating the
constant $h>0$, but the change is inessential for the comparison results.

For simplicity, all the restrictions of the fractional Laplacian operator will be denoted by
$(-\Delta)^{\sigma/2}$. The underlying assumption is that such operator will be restricted to the ground domain of each boundary value problem where it is
involved.

\medskip

\noindent {\sc The extension method.}  The Caffarelli-Silvestre method can be kept as extension to the whole ${\mathcal{H}}^+$, with the following important
proviso: the extension must act on the null-extensions to $\ren$ of functions
defined in $\Omega$. In view of this discussion, a solution to problem (\ref{nolin.ell}) is defined here as the trace of a properly defined Dirichlet-Neumann
problem in the following way:
\begin{equation}\label{eq.3}%
\left\{
\begin{array}
[c]{lll}%
-\operatorname{div}_{x,y}\left(  y^{1-\sigma}\nabla w\right)  =0 &  & in\text{
}{\mathcal{H^+}},\\[6pt]
\ w(x,0)=0 &  & \mbox{ for } x\in\R^{N}\setminus \Omega,\\[6pt]
\displaystyle{-\frac{1}{\kappa_{\sigma}}\lim_{y\rightarrow0^{+}}y^{1-\sigma}\,\dfrac{\partial w}{\partial y}(x,y)}+\,B(w(x,0))=f\left(  x\right)   &
&
\mbox{ for } x\in \Omega,
\end{array}
\right.
\end{equation}
where ${\mathcal{H}^+}:=\ren\times\left(  0,+\infty\right)  $, and $\kappa_{\sigma}$ is the constant
$\kappa_{\sigma}:=2^{1-\sigma}\,\Gamma(1-\frac{\sigma}{2})/{\Gamma(\frac{\sigma}{2})}$, see \cite{CaffSilv}, but such a value is not important here.

\subsection{Review of existence, uniqueness and main properties}

If problem \eqref{eq.3} is
solved in an appropriate sense, then the trace of $w$ over $\Omega$, $\text{Tr}_{\Omega}(w)=w(\cdot,0)=:v$ is said to be a solution to
problem (\ref{eq.1}). Note that the trace of $w$ on the bottom hyperplane  $\{y=0\}$ is the extension $\widetilde v$ of the function $v$ defined in $\Omega$ by
assigning the value zero outside of $\Omega$. This is what makes the difference with the case of the spectral Laplacian, where on the contrary the domain of the
extended function $w$ is the  cylinder ${\cal C}=\Omega\times(0,\infty)$ and $w$ takes zero boundary conditions on the lateral boundary,
$\Sigma=\partial\Omega\times [0,\infty)$, see \cite{VazVol1}. In accordance with our choice of operator and in view of the iteration process that leads to the
solution of the parabolic equations, we need only consider functions $f$ that are restrictions to $\Omega$ of functions defined in the whole of $\ren$.

In order to  make this more precise, we introduce the concept of weak solution to problem \eqref{eq.3}. It is convenient to define the weighted energy space
$$
X^{\sigma/2}(\mathcal{H}^{+})=\left\{  w\in H^{1}_{loc}(\mathcal{H}^+) : \int_{\mathcal{H}^+}
y^{1-\sigma}|\nabla_{x,y} w(x,y)|^{2}\,dxdy< \infty\right\}\,,
$$
equipped with the norm
\begin{equation}
\Vert w\Vert_{X^{\sigma/2}(\mathcal{H}^{+})}:=\left(  \int_{\mathcal{H}^+} y^{1-\sigma}\,|\nabla w(x,y)|^{2}\,dxdy\right)  ^{1/2}.\label{norm}
\end{equation}
For an open  set $E$ of $\R^{N}$ we denote by $H^{\sigma/2}(E)$ the classical fractional Sobolev space of order $\sigma/2$ over $E$. We recall that for any
$u\in
H^{\sigma/2}(\R^{N})$  there exists a unique $\sigma$-harmonic extension $w\in X^{\sigma/2}(\mathcal{H}^{+})$ of $u$ to the half space $\mathcal{H}^{+}$, namely
$w$ solves
\begin{equation}
\left\{
\begin{array}
[c]{lll}%
-\operatorname{div}_{x,y}\left(  y^{1-\sigma}\nabla w\right)  =0 &  & in\text{
}{\mathcal{H^+}},\\[6pt]
\ w(x,0)=u(x) &  & \mbox{ for } x\in\R^{N}.\\[6pt]
\end{array}
\right.  \label{extension}%
\end{equation}
Then we will write $w=Ext_{\mathcal{H}^{+}}(u).$ Moreover, for suitable functions $u$ we have
\[
(-\Delta)^{\sigma/2}u=-\frac{1}{\kappa_{\sigma}}\lim_{y\rightarrow0^{+}}y^{1-\sigma}\,\dfrac{\partial w}{\partial y}(x,y).
\]

Now, in order to give a proper meaning of solution to a problem like \eqref{eq.3} in a bounded domain $\Omega$, we define the space of all functions in
$X^{\sigma/2}(\mathcal{H}^{+})$ whose traces over $\R^{N}$ vanish outside of $\Omega$, namely
\begin{equation}
X_{\Omega}^{\sigma/2}(\mathcal{H}^{+})=\left\{  w\in X^{\sigma/2}(\mathcal{H}^{+}) :
w|_{\R^{N}\times\left\{0\right\}}\equiv0\,\,in\,\,\R^{N}\setminus\Omega\right\}.\label{spaceforext}
\end{equation}
The domain of the natural fractional Laplacian $(-\Delta)^{\sigma/2}$ is the space ${\cal H}(\Omega)$  defined by
\begin{equation}
{\cal H}(\Omega)=
\left\{
\begin{array}
[c]{lll}%
H^{\sigma/2}(\Omega) &  & if\,0<\sigma<1,\\[6pt]
H_{00}^{1/2}(\Omega) &  & if\,\sigma=1\\[6pt]
H_{0}^{\sigma/2}(\Omega) &  & if\,1<\sigma\leq2,
\end{array}
\right.  \label{spaceH}%
\end{equation}
where $H^{\sigma/2}(\Omega)$ and $H^{\sigma/2}_0(\Omega)$ are usual fractional Sobolev spaces, see  \cite{LiMa1972}, and
$$H_{00}^{1/2}(\Omega)=\left\{u\in H^{1/2}(\Omega):\int_{\Omega}\frac{u^{2}(x)}{d^2(x)}dx<\infty\right\}$$
with $d(x)=dist(x,\partial\Omega)$.\, It turns out that
\[
{\cal H}(\Omega)
=\left\{w|_{\Omega\times\left\{0\right\}}:w\in X_{\Omega}^{\sigma/2}(\mathcal{H}^{+})\right\}
\]
\normalcolor
(see \cite{BonfSireVaz2014} for a detailed account on this question). Then we provide the following definition:
\begin{definition}\label{definition2}
Let $\Omega$ be an open bounded set of $\ren$ and $f\in L^{1}(\Omega)$. We say that $w\in X_{\Omega}^{\sigma/2}(\mathcal{H}^{+})$ is a weak solution to
\eqref{eq.3}
if \ $Tr_{\Omega}(B(w))=:B(w(x,0))\in L^{1}(\Omega)$
and
\begin{equation}
\int_{\mathcal{H}^+}y^{1-\sigma}\nabla_{x,y} w\cdot \nabla_{x,y}\varphi\,dx\,dy+ \kappa_{\sigma}\int_{\Omega} B(w(x,0)) \,\varphi(x,0)dx
=\kappa_{\sigma}\int_{\Omega}f(x)\,\varphi(x,0)dx\label{weakfor}
\end{equation}
for all the test functions $\varphi\in C^{1}(\overline{\mathcal{H}^{+}})$ such that
$\text{Tr}_{\R^{N}}(\varphi)\equiv0\,\,in\,\,\R^{N}\setminus\Omega$.
\end{definition}
If $w$ is a solution to the ``extended problem'' \eqref{eq.3}, then the  trace function $v=\text{Tr}_{\Omega}(w)$ will be called a weak solution
to
problem
\eqref{eq.1}.
\begin{remark}
It is clear that if $B(t)=ct$ for all $t\geq0$ for some $c\geq0$, then problem \eqref{eq.3} becomes linear and the function $v=\text{Tr}_{\Omega}(w)$ belongs to
the space $\mathcal{H}(\Omega)$.
\end{remark}
Concerning existence of solutions, their smoothness and $L^{1}$ contraction properties, we excerpt some known results from \cite{BonfSireVaz2014,pqrv,pqrv2}
which can be extended for our more general nonlinearity $B$. For the regularity the reader may consult \cite{CabSire, CaffSilv, SerVal2, Silv1, Silv2}.

\begin{theorem}\label{th.exist}
For any $f\in L^{\infty}(\Omega)$ there exists a unique weak solution $w\in X_{0}^{\sigma/2}(\mathcal{C}_{\Omega})$ to problem \eqref{eq.3}, such that
$\text{Tr}_{\Omega}(B(w))\in L^{\infty}(\Omega)$. Moreover,

\smallskip

\noindent {\rm (i)} Regularity: we have \normalcolor  $w\in C^{\alpha}(\mathcal{C}_{\Omega})$ for every $\alpha<\sigma$ if $\sigma\le 1$ (resp. $w\in
C^{1,\alpha}(\mathcal{C}_{\Omega})$ for every $\alpha<\sigma-1$ if $\sigma> 1$). Arguing as in {\rm \cite{CT}}, higher regularity of $w$ depends easily on
higher
regularity of $f$ and $B$.

\noindent {\rm (ii)} $L^{1}$ contraction: if \, $w,\widetilde{w}$  are the solutions to \eqref{eq.3} corresponding to data $f,\widetilde{f}$, the
following
$L^{1}$
contraction property holds:
\begin{equation}
\int_{\Omega}\left[B(w(x,0))-B(\widetilde{w}(x,0))\right]_{+}dx\leq\int_{\Omega}[f(x)-\widetilde{f}(x)]_{+}dx.\label{contraction}
\end{equation}
In particular, we have that $w\geq0$ in $\overline{\mathcal{C}}_{\Omega}$ whenever $f\geq0$ on $\Omega$. Furthermore, if we put
$u:=B(w(\cdot,0))$,
then for
all
$p\in [1, \infty]$
we have
\[
\|u\|_{L^{p}(\Omega)}\leq \|f\|_{L^{p}(\Omega)}.
\]

\noindent {\rm (iii)} For data $f\in L^1(\Omega)$ the weak solution is obtained as the limit of the solutions of approximate problems with $f_n\in
L^1(\Omega)\cap L^\infty(\Omega)$, $f_n\to f$ in $L^1$, since then the sequence $\{B(w_n(x,0))\}_n$ also converges in $L^1$ to some $B(w(x,0)$,  and
$\|B(w(x,0))\|_1\le \|f\|_1$, hence $v_n$ in uniformly bounded in $L^p$ for all small $p$. Property {\rm (ii)} holds for such limit solutions.
\end{theorem}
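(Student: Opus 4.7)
The plan is to treat existence, uniqueness, regularity, and the contraction separately, with the first two coming from variational and extension arguments that are now fairly standard for the $A_2$--weighted degenerate elliptic equation in the half-space, and the contraction/$L^p$ bounds coming from a Kato-type test function argument exploiting the monotonicity of $B$.

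For existence and uniqueness when $f\in L^\infty(\Omega)$, I would set up the direct method on the Hilbert space $X^{\sigma/2}_\Omega(\mathcal{H}^+)$. Writing $\Phi(s):=\int_0^s B(\tau)\,d\tau$, which is nonnegative and convex since $B$ is monotone increasing with $B(0)=0$, I consider the functional
\begin{equation*}
\mathcal{J}(w)=\frac{1}{2}\int_{\mathcal{H}^+}y^{1-\sigma}|\nabla w|^2\,dx\,dy+\kappa_\sigma\int_\Omega \Phi(w(x,0))\,dx-\kappa_\sigma\int_\Omega f(x)\,w(x,0)\,dx.
\end{equation*}
On $X^{\sigma/2}_\Omega(\mathcal{H}^+)$ the trace inequality plus a Poincar\'e-type bound in $\mathcal{H}(\Omega)$ makes the quadratic term coercive; since $\Phi\ge 0$ and $f\in L^\infty\subset L^2$, $\mathcal J$ is coercive and weakly sequentially lower semicontinuous, and strictly convex by the quadratic part together with the convexity of $\Phi$. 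A unique minimizer $w$ exists, and its Euler--Lagrange equation is exactly \eqref{weakfor}, so $w$ is the unique weak solution; the inequality $\|u\|_\infty \le \|f\|_\infty$, with $u=B(w(\cdot,0))$, follows from testing with $(w-M)_+$ and its extension for $M$ such that $B(M)\ge\|f\|_\infty$, which forces $(w-M)_+\equiv 0$ by the same argument as for $L^1$ contraction below.

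For regularity, once $v:=w(\cdot,0)\in L^\infty(\Omega)$ is known, the extension $w$ solves an equation in $\mathcal{H}^+$ with weight $y^{1-\sigma}$ whose co-normal boundary datum $f-B(v)$ belongs to $L^\infty(\Omega)$. Applying the boundary regularity results of Caffarelli--Silvestre \cite{CaffSilv} (see also Cabr\'e--Sire \cite{CabSire}, Silvestre \cite{Silv1}, and the Dirichlet adaptation in \cite{CT}) yields $w\in C^\alpha(\overline{\mathcal{H}^+})$ for every $\alpha<\sigma$ when $\sigma\le 1$, and $w\in C^{1,\alpha}$ for every $\alpha<\sigma-1$ when $\sigma>1$. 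Higher regularity is then a bootstrap: improved regularity of $v$ feeds back through $f-B(v)$ and through smoothness of $B$.

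The central computation is the $L^1$ contraction \eqref{contraction}, which also underlies the $L^p$ bounds and the extension to $f\in L^1$. Let $p_\varepsilon:\R\to[0,1]$ be a smooth, nondecreasing approximation of $\chi_{\{s>0\}}$ with $p_\varepsilon(s)=0$ for $s\le 0$ and $p_\varepsilon(s)=1$ for $s\ge \varepsilon$. Subtracting the weak formulations satisfied by $w$ and $\widetilde w$ and testing with $\varphi=p_\varepsilon(w-\widetilde w)$ (admissible after a standard density argument, since the trace of $w-\widetilde w$ vanishes outside $\Omega$, so does $\varphi$), one gets
\begin{equation*}
\int_{\mathcal{H}^+}y^{1-\sigma}p_\varepsilon'(w-\widetilde w)\,|\nabla(w-\widetilde w)|^2\,dx\,dy+\kappa_\sigma\!\int_\Omega(B(w)-B(\widetilde w))\,p_\varepsilon(w-\widetilde w)\,dx=\kappa_\sigma\!\int_\Omega(f-\widetilde f)\,p_\varepsilon(w-\widetilde w)\,dx.
\end{equation*}
The first term is nonnegative; the key point is that the strict monotonicity of $B$ gives $\{w>\widetilde w\}=\{B(w)>B(\widetilde w)\}$ a.e., so letting $\varepsilon\to 0$ by dominated convergence the second term converges to $\int_\Omega(B(w)-B(\widetilde w))_+\,dx$ while the right-hand side is bounded by $\kappa_\sigma\int_\Omega(f-\widetilde f)_+\,dx$. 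The $L^p$ bound for $1<p<\infty$ follows either by interpolating between the $L^1$ and $L^\infty$ estimates (both have constant $1$, so the operator is $T$-accretive) or by repeating the test-function argument with $p_\varepsilon$ replaced by a smooth truncation of $s\mapsto |s|^{p-1}\mathrm{sgn}(s)$; finally, $L^1$ data are handled by approximating $f$ by $f_n\in L^1\cap L^\infty$, applying \eqref{contraction} to extract an $L^1$-Cauchy sequence $\{B(w_n(\cdot,0))\}$ and passing to the limit. The delicate point is the first step: justifying the chain rule for $p_\varepsilon(w-\widetilde w)$ in the weighted space $X^{\sigma/2}(\mathcal{H}^+)$ and the passage to the limit in the boundary integral, which I expect to require the approximation density of $C^1(\overline{\mathcal{H}^+})$-functions with appropriate trace support in $X^{\sigma/2}_\Omega(\mathcal{H}^+)$.
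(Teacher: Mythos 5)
The paper offers no proof of this theorem at all: it is introduced with the sentence ``Concerning existence of solutions, their smoothness and $L^1$ contraction properties, we excerpt some known results from \cite{BonfSireVaz2014,pqrv,pqrv2} which can be extended for our more general nonlinearity $B$'', and regularity is likewise deferred to \cite{CabSire,CaffSilv,SerVal2,Silv1,Silv2}. So there is nothing in the paper to compare against line by line; what one can say is that your outline reconstructs the standard argument that those references actually carry out, and the structure is sound.

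A few points deserve flagging. For existence via the direct method you need the functional $\mathcal J$ to be Gateaux differentiable at the minimizer in order to derive the Euler--Lagrange equation, which is not automatic if $B$ grows faster than the critical trace exponent allows; the usual remedy (used in the cited sources) is to truncate $B$, obtain the a priori $L^\infty$ bound uniformly in the truncation, and pass to the limit, which also makes the $\Phi$-term in $\mathcal J$ harmless. Your $L^\infty$ argument via testing with $(w-M)_+$ with $B(M)\ge\|f\|_\infty$ is correct, and the Kato-type computation for \eqref{contraction} is the right one; you correctly identify that the chain rule for $p_\varepsilon(w-\widetilde w)$ and the trace convergence in the weighted space are the technical points to be supplied (standard for the $A_2$-weight $y^{1-\sigma}$). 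The one step stated a bit too casually is the $L^p$ bound ``by interpolating between the $L^1$ and $L^\infty$ estimates'': the map $f\mapsto B(w(\cdot,0))$ is nonlinear, so Riesz--Thorin does not apply directly; what works is the B\'enilan--Crandall/Crandall--Tartar circle of ideas for order-preserving $T$-contractions, or more simply the direct Stampacchia-type argument you mention (test the equation for $w$ against a smooth truncation of $|B(v)|^{p-1}\mathrm{sgn}(v)$ pulled through the extension), but that alternative should be the primary route rather than the afterthought. Part (iii) (approximation of $L^1$ data) follows from \eqref{contraction} exactly as you say.

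In short: the paper gives no proof, and your outline is a correct reconstruction of the route taken in the sources the paper cites, with the caveats above on the $L^p$ step and on justifying Euler--Lagrange without a truncation of $B$.
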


\normalcolor \begin{remark} \label{remarknonh} \textnormal{(On some nonhomogeneous boundary value problems)}. \rm Let $\varepsilon>0$. For our arguments it will
be essential to consider problems with nonhomogeneous boundary values  of the type
\begin{equation} \label{nonhomognon}
\left\{
\begin{array}
[c]{lll}%
\left(  -\Delta\right)^{\sigma/2}v+  B(v)=f\left(  x\right)   &  & in\text{ }%
\Omega,\\[6pt]
v=\varepsilon &  & in\text{ }\R^{N}\setminus\Omega\,.
\end{array}
\right. %
\end{equation}
In order to ensure the existence of a solution to this problem, we will associate to it the following nonhomogeneous extension problem
\begin{equation}
\left\{
\begin{array}
[c]{lll}%
-\operatorname{div}_{x,y}\left(  y^{1-\sigma}\nabla w\right)  =0 &  & in\text{
}{\mathcal{H^+}},\\[6pt]
\ w(x,0)=\varepsilon &  & \mbox{ for } x\in\R^{N}\setminus \Omega,\\[6pt]
\displaystyle{-\frac{1}{\kappa_{\sigma}}\lim_{y\rightarrow0^{+}}y^{1-\sigma}\,\dfrac{\partial w}{\partial y}(x,y)}+\,B(w(x,0))=f\left(  x\right)   &
&
\mbox{ for } x\in \Omega.
\end{array}
\right.  \label{nonhomogext}%
\end{equation}
If $f\in L^{1}(\Omega)$, we say that $w\in X^{\sigma/2}(\mathcal{H}^{+})$ is a weak solution to \eqref{nonhomogext} if $w-\varepsilon\in
X_{\Omega}^{\sigma/2}(\mathcal{H}^{+})$ and $w$ satisfies \eqref{weakfor}. In such case, we say that $v=\text{Tr}_{\Omega}w$ is a weak solution to problem
\eqref{nonhomognon}. In particular, if $B(t)=ct$ for all $t\geq0$ and some $c>0$, and $\overline{v}$ is the solution to the  linear problem with homogeneous
boundary data
\begin{equation*}
\left\{
\begin{array}
[c]{lll}%
\left(  -\Delta\right)^{\sigma/2}\overline{v}+  c\overline{v}=f(x)-c\varepsilon   &  & in\text{ }%
\Omega,\\[6pt]
\overline{v}=0 &  & in\text{ }\R^{N}\setminus\Omega,
\end{array}
\right. %
\end{equation*}
then $v=\overline{v}+\varepsilon$ is the unique solution to the linear, nonhomogeneous problem \eqref{nonhomognon}.\smallskip\\
Using the variational formulation to \eqref{nonhomogext}, it is easy to prove that if $\varepsilon>0$, $v_{\varepsilon}$ is the unique weak solution to
\eqref{nonhomognon} and $w_{\varepsilon}$ is its extension solving \eqref{nonhomogext}, then
\[
w_{\varepsilon}\rightarrow w\,\quad\,in\,L^{1}(\mathcal{H^{+}}),\]
\[
\text{Tr}_{\R^{N}}w_{\varepsilon}\rightarrow\text{Tr}_{\R^{N}}w,\quad\,in\,L^{1}(\R^{N}),
\]
where $v$, $w$ solve \eqref{eq.1}-\eqref{eq.3} respectively.
\end{remark}
We warn the reader that the solutions of all the Dirichlet problems throughout the paper will be identified with their extension on the whole $\R^{N}$, whose
values out of $\Omega$ will clearly depend on the boundary conditions considered.

\section{Concentration comparison for the extended problem}\label{sec.ell2}

Let us address the comparison issue. From now on, we will always assume that the right-hand side $f$ is nonnegative.  Our goal here is to compare the solution
$v$ to \eqref{eq.1} with the solution $V$ to the problem
\begin{equation}
\left\{
\begin{array}
[c]{lll}%
\left(  -\Delta\right)  ^{\sigma/2}V+\, B(V)=f^{\#}\left(  x\right)   &  & in\text{ }%
\Omega^{\#}\\[6pt]
V=0 &  & on\text{ }\R^{N}\setminus\Omega^{\#}.
\end{array}
\right.  \label{eq.4}%
\end{equation}

A reasonable way to do that is to compare the solution $w$ to \eqref{eq.3} with the solution $\psi$ to the problem
\begin{equation}
\left\{
\begin{array}
[c]{lll}%
-\operatorname{div}_{x,y}\left(  y^{1-\sigma}\nabla \psi\right)  =0  &  & in\text{
}\mathcal{H}^{+}\\[6pt]
\psi(x,0)=0 &  & for\text{ }x\in\R^{N}\setminus \Omega^{\#}\\[6pt]
\displaystyle{-\frac{1}{\kappa_{\sigma}}\lim_{y\rightarrow0^{+}}y^{1-\sigma}\,\dfrac{\partial \psi}{\partial y}(x,y)}+\,B(\psi(x,0))=f^{\#}\left(
x\right)
&  & in\text{ }\Omega^{\#},
\end{array}
\right.  \label{eq.5.0}%
\end{equation}
where $\psi(x,0)=V(x)$.
According to \cite{dBVol}, using the change of variables $z=({y}/{\sigma})  ^{\sigma},$
problems \eqref{eq.3} and \eqref{eq.5.0} become respectively

\begin{equation}
\left\{
\begin{array}
[c]{lll}%
-z^{\nu}\dfrac{\partial^{2}w}{\partial z^{2}}-\Delta_{x}w=0 &  & in\text{
}\mathcal{H}^{+}\\
&  & \\
w(x,0)=0 &  & for\text{ }x\in\R^{N}\setminus \Omega^{\#}\\
&  & \\
-\dfrac{\partial w}{\partial z}\left(  x,0\right)  =%
\,\sigma^{\sigma-1}\kappa_{\sigma}\left(f\left(  x\right)-B(w(x,0))\right)  &  & in\text{ }\Omega,
\end{array}
\right.  \label{eq.23}%
\end{equation}
and
\begin{equation}
\left\{
\begin{array}
[c]{lll}%
-z^{\nu}\dfrac{\partial^{2}\psi}{\partial z^{2}}-\Delta_{x}\psi=0 &  & in\text{
}\mathcal{H}^{+}\\
&  & \\
\psi=0 &  & for\text{ }x\in\R^{N}\setminus \Omega^{\#}\\
&  & \\
-\dfrac{\partial \psi}{\partial z}\left(  x,0\right)  =%
\,\sigma^{\sigma-1}\kappa_{\sigma}\left(f^{\#}\left(  x\right)-B(\psi(x,0))\right)  &  & in\text{ }\Omega^{\#}.
\end{array}
\right.  \label{eq.24}%
\end{equation}

where $\nu:=2\left(  \sigma-1\right)  /\sigma.$ \
Then, the problem reduces to prove the concentration comparison between the solutions $w(x,z)$ and $\psi(x,z)$ to \eqref{eq.23}-\eqref{eq.24} respectively.  We
now introduce the function
\begin{equation}
{Z}(s,z)=\int_{0}^{s}(w^{\ast}(\tau,z)-\psi^{\ast}(\tau,z))d\tau\,.
\end{equation}
Using standard symmetrization tools (see \cite{dBVol}) we get the differential inequality
\begin{equation}
-(z^{\nu}{Z}_{zz}+p(  s) {Z}_{ss})\leq0\label{symineq}
\end{equation}
for a.e $(s,z)\in \left(  0,+\infty \right)
\times\left(  0,+\infty\right)  $\normalcolor,  where $p (s) = N^2\omega_N^{2/N} s^{2- 2/ N}$. Obviously, we have
\begin{equation}
Z(0,y)=0\label{boundcond}.
\end{equation}
A crucial point in our arguments below is played by the derivative of $Z$ with respect to $z$. Due to the boundary conditions contained
in
\eqref{eq.23}-\eqref{eq.24}, we have for $0<s<|\Omega|$
\begin{equation}\label{Z_yboundary.formula}
{Z}_{z}(s,0)\geq \theta_{\sigma}\int_{0}^{s} (B(w^*(\tau,0))-B(\psi^{\ast}(\tau,0))\, d\tau
\end{equation}
where $\theta_{\sigma}:=\sigma^{\sigma-1}\kappa_{\sigma}$; on the other hand,
for $s\ge |\Omega|$ we have $(\partial{Z}/\partial s\,)(s,0)=0$. This is the novelty in the argument with respect to the spectral case treated before, where the
boundary condition
\[
\frac{\partial Z}{\partial{s}}(|\Omega|,z)=0,\quad\forall z\geq0
\]
made the real difference: the boundary conditions are imposed on $y=0$ and are of two types. Contradiction must be obtained after taking into account the two
possibilities.

\medskip


\subsection{Comparison result for the elliptic problem}

We are going to obtain a comparison result for some linear and nonlinear $B$. Actually the nonlinearities considered here will allow to get a result which is
weaker than the one for the {\sl linear} problem, i.\,e., when $B(t)=ct$ for some $c\geq0$, which is the only case that we are going to need in  addressing the
Faber-Krahn inequalities. We also point out that, in order to reach our goal, we will use a lifting-type argument of the symmetrized problem \eqref{eq.4}

\begin{theorem}\label{thm.ell.linear} Let $v$ be the nonnegative solution of  problem $\eqref{eq.1}$ posed in a bounded domain with zero
Dirichlet
boundary
condition, nonnegative data $f\in L^1(\Omega)$ and $B$ is smooth, concave, strictly increasing on $\R_{+}$ and such that $B(0)=0$. If $V$
is the solution of the corresponding symmetrized problem \eqref{eq.4}, we have
\begin{equation}
v^\#(x)\prec V(x).\label{concinequlin}
\end{equation}
The same is true if $\Omega=\ren$.
\end{theorem}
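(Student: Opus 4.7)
The plan is to transplant both problems to the Caffarelli--Silvestre extended setting \eqref{eq.23}--\eqref{eq.24}, package the desired concentration inequality into a single ``radial mass'' function, and conclude by a weak maximum principle combined with Hopf's lemma at $\{z=0\}$. Concretely, pass to the extensions $w,\psi$ solving \eqref{eq.23}--\eqref{eq.24} and set
\[
Z(s,z)\;=\;\int_0^s \bigl[w^{\ast}(\tau,z)-\psi^{\ast}(\tau,z)\bigr]\,d\tau,\qquad (s,z)\in (0,\infty)\times[0,\infty),
\]
so that the target $v^{\#}\prec V$ is equivalent to $Z(s,0)\le 0$ for every $s>0$. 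The symmetrization estimates recalled in the excerpt already give the interior differential inequality $-(z^{\nu}Z_{zz}+p(s)Z_{ss})\le 0$, the lateral value $Z(0,z)=0$, the vanishing $Z_s(s,0)=0$ on $\{s\ge|\Omega|\}$, and the nonlinear boundary control \eqref{Z_yboundary.formula} for $0<s<|\Omega|$; integrability of the data provides decay of $Z$ as $s\to\infty$ and as $z\to\infty$.

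Because the boundary relation \eqref{Z_yboundary.formula} is written through $B(w^{\ast})-B(\psi^{\ast})$ rather than $w^{\ast}-\psi^{\ast}$, a direct maximum principle is obstructed unless the weight $B'(\cdot)$ can be kept bounded away from zero on the range of the rearrangements. Following the hint in the statement, I would carry out the comparison after a lifting: replace $V$ by the solution $V_{\varepsilon}$ of the nonhomogeneous problem \eqref{nonhomognon} with exterior datum $\varepsilon>0$ and extension $\psi_{\varepsilon}$, prove $v^{\#}\prec V_{\varepsilon}$ in this strictly positive regime, and pass to the limit $\varepsilon\to 0^{+}$ via the $L^{1}$ convergence of Remark \ref{remarknonh}.

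Next, argue by contradiction at the $\varepsilon$-level: suppose $\sup Z>0$. Decay at infinity and the condition $Z(0,z)=0$ localize the supremum to some point $(s_0,0)$; the degenerate weak maximum principle plus $Z_s(s,0)\equiv 0$ on $\{s\ge|\Omega|\}$ force $s_0\in(0,|\Omega|)$. Hopf's lemma yields $Z_z(s_0,0)\le 0$, so \eqref{Z_yboundary.formula} gives
\[
\int_0^{s_0}\bigl[B(w^{\ast}(\tau,0))-B(\psi_{\varepsilon}^{\ast}(\tau,0))\bigr]\,d\tau\;\le\;0.
\]
Inserting the tangent-line inequality $B(a)-B(b)\ge B'(a)(a-b)$ valid for concave $B$, choosing $s_0$ inside a positivity interval of $Z(\cdot,0)$ ending at the maximum, and integrating by parts against the nondecreasing weight $B'(w^{\ast}(\cdot,0))$ (monotone because $w^{\ast}$ is nonincreasing in $s$ and $B'$ nonincreasing) upgrades the displayed inequality to a strictly positive lower bound of order $B'(w^{\ast}(s_0,0))\,Z(s_0,0)>0$, the contradiction; the lifting is precisely what guarantees $B'(w^{\ast}(s_0,0))\neq 0$.

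The hard step will be this last one: picking the contradiction point $s_0$ and the positivity interval carefully enough so that the tangent-line estimate and the subsequent integration by parts become strict. This is also the single place where concavity of $B$ is used, and is the reason the theorem stops short of covering fully general monotone $B$. In the linear case $B(t)=ct$ needed in Section \ref{sect.fk} the whole argument trivialises, since \eqref{Z_yboundary.formula} collapses to the Robin bound $Z_z(s_0,0)\ge c\theta_{\sigma}Z(s_0,0)>0$, which contradicts $Z_z(s_0,0)\le 0$ with no further work.
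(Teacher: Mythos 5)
Your proposal reproduces the paper's proof in all essentials: the $\varepsilon$-lifting of the symmetrized problem, the maximum-principle localization of a hypothetical positive maximum of $Z_\varepsilon$ to a boundary point $(s_0,0)$ with $s_0\in(0,|\Omega|]$, the concave tangent-line estimate $B(a)-B(b)\ge B'(a)(a-b)$, and the integration by parts against the nondecreasing weight $g(s)=B'(v^{\ast}(s))$ to contradict Hopf's lemma. One small misattribution: the lifting does not serve to keep $B'(w^{\ast}(s_0,0))$ away from zero (that is automatic from the hypothesis $B'>0$ on $\R_{+}$, and concavity makes $B'$ nonincreasing so $B'(v^{\ast}(s))\ge B'(\|v\|_{\infty})>0$ throughout); its real role---used implicitly in your ``decay at infinity'' step and made explicit in the paper via $Z_{\varepsilon,s}(s,0)=-\varepsilon<0$ for $s\ge|\Omega|$---is to force $\psi_{\varepsilon}^{\ast}\to\varepsilon>0$ while $w^{\ast}\to 0$, so that the maximum of $Z_\varepsilon$ cannot escape to $s=\infty$.
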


\begin{proof} In this case we pose the problem first in a bounded domain $\Omega$ of $\ren$ with smooth boundary. We also assume that $f$ is smooth, bounded and
compactly supported in $\Omega$, since the comparison result for general data can be obtained later by approximation using the $L^1$ dependence of the map
$f\mapsto v$. Let us choose $\varepsilon>0$,
$f_{\varepsilon}=f+B(\varepsilon)$ and let us consider the solution $V_{\varepsilon}$ to the problem
\begin{equation*}
\left\{
\begin{array}
[c]{lll}%
\left(  -\Delta\right)  ^{\sigma/2}V_{\varepsilon}+B(V_{\varepsilon})=f_{\varepsilon}^{\#}\left(  x\right)   &  & in\text{ }%
\Omega^{\#}\\[6pt]
V_{\varepsilon}=\varepsilon &  & on\text{ }\R^{N}\setminus\Omega^{\#},
\end{array}
\right.  \label{eq.4}%
\end{equation*}
where $f_{\varepsilon}^{\#}=f^{\#}+B(\varepsilon)$. By virtue of Remark \ref{remarknonh} we have  $V_{\varepsilon}=\psi_{\varepsilon}(x,0)$
for all $x\in \R^{N}$, where $\psi_{\varepsilon}$ is the solution to the problem
\begin{equation*}
\left\{
\begin{array}
[c]{lll}%
-\operatorname{div}_{x,y}\left(  y^{1-\sigma}\nabla \psi_{\varepsilon}\right)  =0  &  & in\text{
}\mathcal{H}^{+}\\[6pt]
\psi_{\varepsilon}(x,0)=\varepsilon
 &  & for\text{ }x\in\R^{N}\setminus \Omega^{\#}\\[6pt]
\displaystyle{-\frac{1}{\kappa_{\sigma}}\lim_{y\rightarrow0^{+}}y^{1-\sigma}\,\dfrac{\partial \psi_{\varepsilon}}{\partial
y}(x,y)}+B(\psi_{\varepsilon}(x,0))=f^{\#}\left(
x\right)+B(\varepsilon)
&  & in\text{ }\Omega^{\#},
\end{array}
\right.  \label{eq.5}%
\end{equation*}
which can be reduced to the problem
\begin{equation}
\left\{
\begin{array}
[c]{lll}%
-z^{\nu}\dfrac{\partial^{2}\psi_{\varepsilon}}{\partial z^{2}}-\Delta_{x}\psi_{\varepsilon}=0 &  & in\text{
}\mathcal{H}^{+}\\
&  & \\
\psi_{\varepsilon}(x,0)=\varepsilon &  & for\text{ }x\in\R^{N}\setminus \Omega^{\#}\\
&  & \\
-\dfrac{\partial \psi_{\varepsilon}}{\partial z}\left(  x,0\right)  =%
\,\sigma^{\sigma-1}\kappa_{\sigma}\left(f^{\#}\left(  x\right)+B(\varepsilon)-B(\psi_{\varepsilon}(x,0))\right)  &  & in\text{ }\Omega^{\#}.
\end{array}
\right.  \label{pbepsnonlin}%
\end{equation}
Setting
\[
{Z}_{\varepsilon}(s,z)=\int_{0}^{s}(w^{\ast}(\tau,z)-\psi_{\varepsilon}^{\ast}(\tau,z))d\tau\,
\]
We will prove that
\begin{equation}
Z_{\varepsilon}(s,z)\leq0\label{ineqZeps}.
\end{equation}
To this aim, we first observe that $Z_{\varepsilon}$ satisfies inequality \eqref{symineq}. In particular, a big role is played by the property of the solution
$\psi_{\varepsilon}$: in particular, the level sets $\left\{x\in\R^{N}\,:\psi_{\varepsilon}(x,z)>t\right\}$ are bounded because they are balls centered at the
origin.
Now we set
\[
Y_{\varepsilon}(s,z)=\int_{0}^{s}(B(w^*(\tau,z))-B(\psi_{\varepsilon}^*(\tau,z)))\, d\tau.\]
From \eqref{Z_yboundary.formula}, we obtain
\begin{equation}\label{Zeps_yboundary.formula}
\frac{\partial Z_{\varepsilon}}{\partial z}(s,0)\geq \theta_{\sigma}Y_{\varepsilon}(s,0)+\theta_{\sigma}B(\varepsilon).
\end{equation}
By the strong maximum principle applied to equation \eqref{symineq}, , which is satisfied by $Z_{\varepsilon}$, a positive maximum of $Z_{\varepsilon}$ cannot
be
achieved at an interior point, hence it must be achieved either as $s\rightarrow\infty$ or at a
boundary point $(s_0, 0)$ for some $s_0 > 0$. The first option cannot hold since $w^{\ast}(s,z)\rightarrow0$ while $\psi_{\varepsilon}(s,z)\rightarrow
\varepsilon$ as $s\rightarrow\infty$.
As for the second, we see that for $s\geq|\Omega|$ we have
\[
\frac{\partial Z_{\varepsilon}}{\partial s}(s,0)=v^{\ast}(s)-V_{\varepsilon}^{\ast}(s)=-\varepsilon<0,
\]
the function $Z_{\varepsilon}(s,0)$ is strictly decreasing in $[|\Omega|,\infty)$, then it must happen that  $s_0\in (0,|\Omega|]$. Arguing as  as in
\cite{VazVol1}, using \eqref{Zeps_yboundary.formula}, we
can
write
\begin{align}
&\frac{\partial{Z_{\varepsilon}}}{\partial z}(s_{0},0)>\theta_{\sigma}\int_{0}^{s_0}B^{\prime}(v^{\ast}(s))\frac{\partial{Z_{\varepsilon}}}{\partial
s}(s,0)ds\nonumber\\
&=\theta_{\sigma}\left[g(0)Z_{\varepsilon}(s_{0},0)+\int_{0}^{s_0}[Z_{\varepsilon}(s_{0},0)-Z_{\varepsilon}(s,0)]dg(s)\right]>0\label{relmaxZY}
\end{align}
where $g(s):=B^{\prime}(v^{\ast}(s))$, which is impossible due to Hopf's boundary maximum principle.\\
Finally, by \eqref{ineqZeps}
we have, for $s\geq 0$
\[
\int_{0}^{s}v^{\ast}(\tau)d\tau\leq \int_{0}^{s}V_{\varepsilon}^{\ast}(\tau)d\tau
\]
thus as $\varepsilon\rightarrow0$
\[
\int_{0}^{s}v^{\ast}(\tau)d\tau\leq\int_{0}^{s}V^{\ast}(\tau)d\tau.
\]
and the result follows. The case $\Omega=\R^{N}$ has been solved in \cite[Theorem 3.2]{VazVol1}, to which we address the interested reader.
\end{proof}
\begin{remark} If we consider the linear case, \emph{i.e.} when $B(t)=ct$, for some $c\geq0$, the proof of Theorem \ref{thm.ell.linear} can be simplified,
because $Z_{\varepsilon}=cY_{\varepsilon}$, moreover $\psi_{\varepsilon}=\psi+\varepsilon\label{linearidn}$ where $\psi$ solves \eqref{eq.24}, and
$V_{\varepsilon}=V+\varepsilon$.
\end{remark}
\begin{remark}
If $B$ is nonlinear and satisfies the assumptions of Theorem \ref{thm.ell.linear}, the mass concentration comparison \eqref{concinequlin} is not enough to obtain
the same result when passing to the evolution problem via Crandall-Liggett Theorem.
\end{remark}

\normalcolor

A valuable property we are going to prove now is that equality in \eqref{concinequlin}  implies that the domain $\Omega$ of the initial problem \eqref{eq.1} is
actually a {\sl ball} modulo translation. This kind of property is known to be essential for studying the equality case in the Faber--Krahn
inequality, {i.\,e.,} to prove that the ball is the unique minimizer of the principal eigenvalue of the classical Laplacian over all the sets of  fixed Lebesgue
measure (see for instance Kesavan \cite{Kes}). 

\begin{proposition}[Case of equality]\label{equality}
Assume that we have an equality sign in \eqref{concinequlin}, in the sense that
\[
\int_{0}^{s}v^{*}(\sigma)d\sigma=\int_{0}^{s}V^{*}(\sigma)d\sigma
\]
for all $s\in [0,|\Omega|]$. Then $\Omega$ is a ball, i.e. $\Omega=\Omega^{\#}$ (up to a translation of the origin).
\end{proposition}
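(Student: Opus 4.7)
The plan is to trace equality backward through the proof of Theorem \ref{thm.ell.linear}. Let $Z(s, z) = \int_0^s (w^*(\tau,z) - \psi^*(\tau,z))\,d\tau$ be the limit as $\varepsilon \to 0$ of the functions $Z_\varepsilon$ from that proof; then $Z \leq 0$ on the upper half-plane, $Z$ satisfies $-(z^\nu Z_{zz} + p(s) Z_{ss}) \leq 0$, and $Z(0, z) = 0$. Since both $v^*$ and $V^*$ vanish for $s > |\Omega|$, the equality hypothesis forces $v^* \equiv V^*$ on all of $[0, \infty)$, and consequently $Z(s, 0) \equiv 0$ for every $s \geq 0$.

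The second step is to upgrade this to $Z \equiv 0$ throughout the upper half-plane. Since $B(w^*(\cdot,0)) \equiv B(\psi^*(\cdot,0))$, the boundary derivative formula \eqref{Z_yboundary.formula} yields $Z_z(s, 0) \geq 0$ for every $s > 0$. On the other hand, $Z$ is a subsolution of the degenerate elliptic operator that attains its maximum value $0$ on the whole segment $\{z = 0\}$, so Hopf's boundary maximum principle, in the weighted form for the $y^{1-\sigma}$ extension already invoked in the proof of Theorem \ref{thm.ell.linear}, forces $Z_z(s, 0) < 0$ at every regular boundary point unless $Z$ is identically $0$. The only consistent conclusion is $Z \equiv 0$, equivalently $w^*(\cdot, z) = \psi^*(\cdot, z)$ for every $z \geq 0$.

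The final step converts this rearrangement identity into geometric rigidity. The inequality \eqref{symineq} is obtained, at fixed $z$, by a Talenti/P\'olya--Szeg\H{o} rearrangement step applied to $w(\cdot, z)$; the equality $w^*(\cdot, z) \equiv \psi^*(\cdot, z)$ saturates that step, so the Brothers--Ziemer equality case of P\'olya--Szeg\H{o} forces $w(\cdot, z)$ to be radially symmetric and strictly decreasing about some point $x_0 \in \R^N$, and hence equal to a translate of $\psi(\cdot, z)$. Specializing to $z = 0$, the null-extension $\widetilde v$ of $v$ is a translate of the null-extension of $V$, so $\Omega = \{\widetilde v > 0\} = x_0 + \Omega^{\#}$ is a ball of measure $|\Omega^{\#}|$. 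Strict positivity of $v$ on $\Omega$, needed to identify the essential support of $\widetilde v$ with $\Omega$, follows from the strong maximum principle for the restricted fractional Laplacian applied to $f \geq 0$, $f \not\equiv 0$.

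The main technical obstacle lies in this last step: one must verify the Brothers--Ziemer hypothesis that the critical set $\{|\nabla_x \psi(\cdot, z)| = 0\}$ is Lebesgue-null at almost every level, so that equality in P\'olya--Szeg\H{o} truly forces radial symmetry rather than merely equidistribution of level sets, and one must check that the center of symmetry $x_0(z)$ can be taken independent of $z$. Both points can be handled using the regularity of $\psi$ in $x$ (smooth for $z > 0$ by the extension equation) together with a continuity argument as $z \to 0^+$, but they are the delicate parts of the argument.
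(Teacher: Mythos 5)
Your proposal follows essentially the same route as the paper's proof: first show $Z\equiv 0$ (hence $w^*(\cdot,z)\equiv\psi^*(\cdot,z)$) by combining the boundary formula \eqref{Z_yboundary.formula} with the Hopf boundary point lemma, then saturate the Talenti/P\'olya--Szeg\H{o} step, apply the Brothers--Ziemer rigidity theorem at each fixed $z>0$, and pass to $z\to 0^+$. The one point where you are slightly imprecise is in resolving the Brothers--Ziemer hypothesis: you invoke only ``smoothness'' of the extension for $z>0$, but smoothness alone does not rule out flat zones of $w^{\#}(\cdot,z)=\psi(\cdot,z)$; the paper closes this by observing that the Caffarelli--Silvestre extension is given by convolution with the Poisson kernel $P(x,z)=c_{\sigma,N}\,z^{\sigma}(|x|^2+z^2)^{-(N+\sigma)}$, hence $w(\cdot,z)$ and $\psi(\cdot,z)$ are real-analytic in $x$ for $z>0$, which eliminates level plateaux and makes the Brothers--Ziemer equality case applicable.
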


\begin{proof}
Using the same notation as in Theorem \ref{thm.ell.linear}, by the given assumption we have $Y(s,0)=0$ for all $s\in [0,|\Omega|]$, and since the extensions are
null, this equality holds for every $s\geq0$. Then by Hopf's maximum principle and \eqref{Z_yboundary.formula} we find $Y\equiv0$, thus
\begin{equation}
w^{\ast}(s,z)=\psi^{*}(s,z)\quad \mbox{for all } \ s,z\geq0.\label{eqrearr}
\end{equation}

We divide the rest into some steps:  (i) Here we argue as in \cite{Talenti1} or in \cite{Mossino}. Recall that the function $w(\cdot,z)$ is smooth on $\R^{N}$
for any $z>0$. Then let us fix $z>0$,   multiply both sides of the equation \eqref{eq.23} by the test function
\[
\varphi_{h}^{z}\left(  x\right)  =\left\{
\begin{array}
[c]{lll}%
1 &  & if\text{ \ }w(x,z) \geq t+h\\
&  & \\
\dfrac{ w(x,z) -t}{h}\, &  & if\text{
\ }t< w(x,z) <t+h\\
&  & \\
0 &  & if\text{ \ } w(x,z) \leq t,\text{ }%
\end{array}
\right.
\]
and integrate over $\R^{N}$. An integration by parts yields
the identity
$$\begin{array}{c}
\frac{1}{h}\dint_{\left\{x\in \R^{N}:\, t< w(x,z) <t+h\right\}}\left\vert \nabla
_{x}w\right\vert ^{2}dx-z^{\nu}\dint_{\left\{x\in\R^{N}:\, w(x,z)
>t+h\right\}}\dfrac{\partial^{2}w}{\partial z^{2}}dx\\[12pt]
-z^{\nu}\dint_{\left\{x\in\R^{N}:\,t< w(x,z) <t+h\right\}}\dfrac{\partial^{2}w}{\partial z^{2}%
}\left(  \dfrac{w -t}{h}\right)  dx=0.
\end{array}
$$
Then if we let $h\rightarrow0$ we find
\[
-\frac{\partial}{\partial t}\int_{\left\{x\in\R^{N}:\,w(x,z)>t\right\}}|\nabla_{x}w|^2\,dx=z^{\nu}\int_{\left\{x\in\R^{N}:\, w(x,z)
>t\right\}}\dfrac{\partial^{2}w}{\partial z^{2}}dx
\]
thus using the second order derivation formula (see Appendix) we get
\begin{equation*}
-\frac{\partial}{\partial t}\int_{\left\{x\in\R^{N}:\,w(x,z)>t\right\}}|\nabla_{x}w|^2\,dx\leq
z^{\nu}\int_{0}^{\mu_{w}(t,z)}\frac{\partial^{2}w^{\ast}}{\partial
z^{2}}ds.
\end{equation*}
Concerning the solution $\psi$ to problem \eqref{eq.24}, since it is spherically decreasing w.r.to $x$, the follow equality occurs
\begin{equation*}
-\frac{\partial}{\partial t}\int_{\left\{x\in\R^{N}:\,\psi(x,z)>t\right\}}|\nabla_{x}\psi|^2\,dx=
z^{\nu}\int_{0}^{\mu_{\psi}(t,z)}\frac{\partial^{2}\psi^{\ast}}{\partial z^{2}}ds.
\end{equation*}
then using the fact that $w^{\ast}(s,z)=\psi^{*}(s,z)$ (which implies $\mu_{w}(\cdot,z)=\mu_{\psi}(\cdot,z)$) we have
\begin{equation*}
-\frac{\partial}{\partial t}\int_{\left\{x\in\R^{N}\,:w(x,z)>t\right\}}|\nabla_{x}w|^2\,dx\leq-\frac{\partial}{\partial
t}\int_{\left\{x\in\R^{N}:\,\psi(x,z)>t\right\}}|\nabla_{x}\psi|^2\,dx
\end{equation*}
Integrating between $t$ and $\infty$, we find
\[
\int_{\left\{x\in\R^{N}\,:w(x,z)>t\right\}}|\nabla_{x}w|^2\,dx\leq \int_{\left\{x\in\R^{N}:\,\psi(x,z)>t\right\}}|\nabla_{x}\psi|^2\,dx
\]
then by the P\'olya-Szeg\"o inequality and \eqref{eqrearr}
\begin{align}
&\int_{\left\{x\in\R^{N}\,:w^{\#}(x,z)>t\right\}}|\nabla_{x}w^{\#}|^2\,dx\leq\int_{\left\{x\in\R^{N}\,:w(x,z)>t\right\}}|\nabla_{x}w|^2\,dx\leq
\int_{\left\{x\in\R^{N}:\,\psi(x,z)>t\right\}}
|\nabla_{x}\psi|^2\,dx\nonumber\\&=
\int_{\left\{x\in\R^{N}\,:w^{\#}(x,z)>t\right\}}|\nabla_{x}w^{\#}|^2\,dx\,.
\end{align}
We conclude that for every $t>0$
\begin{equation*}
\int_{\left\{x\in\R^{N}\,:w(x,z)>t\right\}}|\nabla_{x}w|^2\,dx=
\int_{\left\{x\in\R^{N}\,:w^{\#}(x,z)>t\right\}}|\nabla_{x}w^{\#}|^2\,dx\,,
\end{equation*}
which is the \emph{equality} case in the P\'olya-Szeg\"o inequality.

(ii) Now we notice that $w(\cdot,z)$ is analytic on the upper half space as a consequence of its representation formula. Indeed, the  Poisson kernel for the
extension operator $L_\sigma$ is given by
$$
P(x, z) = c_{\sigma,N}\frac{z^\sigma}{(|x|^2 + z^2)^{N+\sigma}}
$$
(see e.\,g. \cite{CaffSilv}), which is an analytic function on the upper half-space $z>0$. This means that for every $h(x)\in L^1(\re^{N})$ the solution
$w(x,z)$
of the elliptic equation with data $w(x,0)=h(x)$ will be analytic, since it is the convolution of $h$ with $P$ (with respect to the $x$ variable).
In fact, once we know that $w\in C^\infty$ in a certain subdomain, it will be analytic by classical results on solutions of elliptic equations with analytic
coefficients (see for instance \cite{Petr}, \cite{Fried}, \cite{Hashimoto}).

(iii) Moreover, each level set $\left\{x\in\R^{N}\,:w(x,z)>t\right\}$ is bounded because $w(\cdot,z)$ decays to zero as $|x|\to \infty$.  We may now use the
equality case in the P\'olya-Szeg\"{o} inequality (see \cite{BroZie}, \cite{FeroneVolp}) to obtain that
$\left\{x\in\R^{N}\,:w(x,z)>t\right\}=\left\{x\in\R^{N}\,:w^{\#}(x,z)>t\right\}$ modulo a translation. Then all the level sets
${\left\{x\in\R^{N}\,:w(x,z)>t\right\}}$ are balls. The results also imply that
 for every fixed $z>0$ the function $w(\cdot,z)$ is radially symmetric up to translation.

(iv) Finally, we take the limit $z\to 0$ and we conclude that $u(x)$ is also radially symmetric (as a function defined in $\ren$). This means that the domain
$\Omega$, which is the positivity set of $u$ in $\ren$, must be a ball, i.\,e., $\Omega=\Omega^{\#}$, up to a translation of the origin.
\end{proof}

The following result can be shown as in the proof of \cite[Theorem 3.3]{VazVol1}

\begin{theorem}[Comparison of concentrations for radial problems]\label{thm.ell.linear.rad} Let $v_1, v_2$ be two nonnegative solutions of  problem
$\eqref{eq.1}$ posed in a ball $B_{R}(0)$,
with
$R\in(0,+\infty]$ with zero Dirichlet boundary conditions if $R<+\infty$, nonnegative radially symmetric decreasing data $f_1, f_2\in L^1(B_{R}(0))$
and $B(t)=ct$ for some $c>0$ and all $t\geq0$. Then $v_1$ and $v_2$ are rearranged, and
\begin{equation}
 f_1\prec f_2 \quad \mbox{implies} \quad v_1\prec v_2\,.
\end{equation}
\end{theorem}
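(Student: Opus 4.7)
The plan is to adapt the argument of Theorem \ref{thm.ell.linear} to two genuine data $f_1,f_2$ rather than one datum and its rearrangement. First, since the $f_i$ are radial and decreasing, the ball $B_R(0)$ is rotationally symmetric, $B(t)=ct$ is linear, and the problem admits a unique solution, rotational invariance together with the comparison principle for the restricted fractional Laplacian forces each $v_i$ to be radially symmetric decreasing; in particular $v_i=v_i^{\#}$, so $v_1$ and $v_2$ are rearranged. Passing to the Caffarelli-Silvestre extensions $w_1,w_2$, the function $w_i(\cdot,z)$ is also radial decreasing in $x$ for every fixed $z>0$, so $w_i^{*}(\cdot,z)$ coincides with $w_i(\cdot,z)$ in the usual volume parametrization $s=\omega_N r^N$.

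After the change of variables $z=(y/\sigma)^\sigma$ reducing each $w_i$ to problem \eqref{eq.23} with datum $f_i$, I would define
$$Z(s,z) := \int_0^s \bigl(w_1^{*}(\tau,z)-w_2^{*}(\tau,z)\bigr)\,d\tau.$$
Reproducing the symmetrization argument of \cite{dBVol} (with the Talenti step tight, since both $w_i$ are already radial), one obtains
$$-\bigl(z^{\nu}Z_{zz}+p(s)Z_{ss}\bigr)\leq 0 \qquad \text{in } (0,\infty)\times(0,\infty),$$
together with $Z(0,z)=0$. From the Neumann conditions in \eqref{eq.23} for $w_1$ and $w_2$ and the linearity $B(t)=ct$, subtracting and integrating in $s$ gives the exact identity
$$\partial_z Z(s,0)=\theta_{\sigma}\,c\,Z(s,0)+\theta_{\sigma}\bigl(F_2(s)-F_1(s)\bigr), \qquad F_i(s):=\int_0^s f_i^{*}(\tau)\,d\tau,$$
and by hypothesis $F_2-F_1\geq 0$ on $[0,\infty)$.

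The claim $v_1\prec v_2$ is $Z(s,0)\leq 0$ for all $s\geq 0$, which I would prove by contradiction: suppose $Z$ attains a strictly positive maximum. The strong maximum principle excludes an interior positive maximum; the values on $\{s=0\}$ vanish; as $z\to\infty$ the Poisson-type extensions decay, so $Z\to 0$; and for $s\geq|B_R|$ one has $v_i^{*}(s)=0$, whence $\partial_s Z(s,0)=0$ and $Z(s,0)=\int v_1-\int v_2\leq 0$, the last inequality coming from integrating $(-\Delta)^{\sigma/2}v_i+cv_i=f_i$ on $\mathbb{R}^N$ (the fractional Laplacian has zero mean on sufficiently decaying functions) together with $F_1(\infty)\leq F_2(\infty)$. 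Thus the positive maximum must occur at some $(s_0,0)$ with $s_0\in(0,|B_R|)$, where Hopf's boundary lemma forces $\partial_z Z(s_0,0)<0$; but the identity above gives $\partial_z Z(s_0,0)\geq \theta_{\sigma} c\,Z(s_0,0)>0$, a contradiction. Setting $z=0$ then yields the conclusion, and the case $R=+\infty$ is identical, decay at spatial infinity replacing the Dirichlet boundary condition.

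The step I expect to be most delicate is not the maximum-principle contradiction itself, which is rather clean, but the careful verification of decay and integrability of $Z$ in the two non-compact directions $s\to\infty$ and $z\to\infty$, needed to legitimately apply the strong maximum principle and Hopf lemma on the unbounded domain $(0,\infty)\times(0,\infty)$. These bounds should follow from the $L^{1}$-contraction and regularity in Theorem \ref{th.exist} combined with the explicit decay of the Poisson kernel for the extension operator, exactly as in the proof of Theorem \ref{thm.ell.linear}; an $\varepsilon$-lifting of the symmetrized solution as used there is not required here, since the presence of two genuine data already produces a strict positivity in the boundary identity at any hypothetical positive maximum.
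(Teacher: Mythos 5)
Your proposal is correct and follows essentially the same strategy as the paper, which defers to the proof of Theorem~3.3 in \cite{VazVol1} (itself a direct adaptation of the argument underlying Theorem~\ref{thm.ell.linear} here: extend, form the concentration function $Z$, derive the differential inequality \eqref{symineq} and the boundary identity at $z=0$, then exclude a positive maximum). Your observation that with both data radial decreasing the Talenti step becomes an equality, and that linearity with $c>0$ plus the mass identity $c\int v_i=\int f_i$ make the $\varepsilon$-lifting unnecessary, correctly captures the simplifications of the radial case.
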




\section{Symmetrization for the parabolic problem}\label{sec.par}
\setcounter{equation}{0}

 The theory of existence of weak solutions for the initial value problem
 \begin{equation}\label{nolin.parab2}
\partial_t u +(-\Delta)^{\sigma/2}A(u)=f, \qquad 0<\sigma<2\,.
\end{equation}
with $A(u)=cu^m$, and all $c,m>0$,  has been addressed by the first author and collaborators in \cite{BonfSireVaz2014}, and the main properties have been
obtained. In particular, if we take initial data in $L^1\cap H^{-s}$, then an $H^{-s}$-contraction semigroup is generated, and the Crandall-Liggett
discretization theorem applies. The construction and properties of the solutions of the evolution problem is thus reduced to an iterated application of the
results obtained for the elliptic counterpart in the previous section. This has been carefully explained in \cite{VazVol1} and is reviewed in Appendix III. Thus,
using Theorem \ref{existmildsol} below, we obtain the existence of a unique mild
solution to the linear Cauchy-Dirichlet problem on the bounded domain $\Omega$:
\begin{equation} \label{eqcauchyDirich}
\left\{
\begin{array}
[c]{lll}%
u_t+(-\Delta)^{\sigma/2}u=f  &  & x\in\Omega\,,t>0%
\\[6pt]
u=0 &  & x\in\R^{N}\setminus\Omega\,,t>0\\[6pt]
u(x,0)=u_{0}(x) &  & x\in\Omega.
\end{array}
\right. %
\end{equation}
obtained as a limit of discrete approximate solutions by the ITD scheme.

Concerning the application of symmetrization techniques to this type of parabolic problems, we can employ Theorems \ref{thm.ell.linear},
\ref{thm.ell.linear.rad}
and the arguments in the proof of \cite[Theorem 5.3]{VazVol1} in order to find the following result

\begin{theorem}[Concentration comparison]\label{conccomplin} Let $u$ be the nonnegative mild solution to problem \eqref{eqcauchyDirich}, $0<\sigma<2$, posed in
$\Omega$, with
initial data $u_0\in L^1(\Omega)$, right-hand side $f\in L^1(\Omega\times(0,\infty))$. Assume that $\overline{u}_{0}\in L^{1}(\Omega^{\#})$ is rearranged,
$\overline{f}(x,t)\in L^{1}(\Omega^{\#}\times(0,\infty)$ is rearranged w.r. to any $t>0$, such that
\[
u_{0}^{\#}\prec \overline{u}_{0}
\]
and
\[
f^{\#}(\cdot,t)\prec \overline{f}(\cdot,t)
\]
for a.e. $t>0$. Let $v$ be the solution of the evolution problem
\begin{equation} \label{eqcauchysymm.f}
\left\{
\begin{array}
[c]{lll}%
v_t+(-\Delta)^{\sigma/2}v=\overline{f}(x,t)  &  & x\in\Omega^{\#}\,, \ t>0,%
\\[6pt]
v=0 &  & x\in\R^{N}\setminus\Omega^{\#}\,,t>0
\\[6pt]
v(x,0)=\overline{u}_{0}(x) &  & x\in\Omega^{\#},
\end{array}
\right. %
\end{equation}
Then, for all $t>0$ we have
\begin{equation}
u^\#(|x|,t)\prec v(|x|,t).
\end{equation}
In particular, we have $\|u(\cdot,t)\|_p \le\|v(\cdot,t)\|_p$ for every $t>0$ and every $p\in [1,\infty]$.
\end{theorem}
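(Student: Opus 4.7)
We follow the strategy of \cite[Theorem~5.3]{VazVol1}: apply the Crandall--Liggett Implicit Time Discretization to reduce the parabolic problem to a chain of linear elliptic problems of the type \eqref{eq.1} with $B(t)=t/h$, apply the elliptic symmetrization results of Section~\ref{sec.ell2} stepwise, and pass to the limit in $h$.

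Fix $h>0$, set $t_k=kh$, and define the averaged sources
\begin{equation*}
f_k(x):=\frac{1}{h}\int_{t_{k-1}}^{t_k}f(x,s)\,ds,\qquad \bar f_k(x):=\frac{1}{h}\int_{t_{k-1}}^{t_k}\bar f(x,s)\,ds.
\end{equation*}
Let $u_k$ be the iterates generated by
\begin{equation*}
h\,(-\Delta)^{\sigma/2}u_k+u_k=u_{k-1}+hf_k\text{ in }\Omega,\qquad u_k=0\text{ on }\R^{N}\setminus\Omega,
\end{equation*}
starting from the initial datum $u_0$; let $v_k$ be the analogous iterates in $\Omega^{\#}$ starting from $\bar u_0$ with sources $\bar f_k$. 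Both sequences are well defined by Theorem~\ref{th.exist}.

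The core step is the induction $u_k^{\#}\prec v_k$ for all $k\geq 0$. The base case $k=0$ is the hypothesis $u_0^{\#}\prec\bar u_0$. For the inductive step, note first that the pointwise-in-$t$ assumption $f^{\#}(\cdot,t)\prec\bar f(\cdot,t)$ together with Fubini (applied to the identity $\int_0^s g^*(\tau)d\tau=\sup_{|E|=s}\int_E g$) yields $f_k^{\#}\prec\bar f_k$, and that $\bar f_k$ is itself rearranged about the origin. Since both $v_{k-1}$ and $\bar f_k$ are spherically decreasing about the same origin, the standard subadditivity property of concentrations gives
\begin{equation*}
(u_{k-1}+hf_k)^{\#}\prec v_{k-1}+h\bar f_k.
\end{equation*}
Applying Theorem~\ref{thm.ell.linear} in its linear form to $u_k$ produces $u_k^{\#}\prec\tilde v_k$, where $\tilde v_k$ solves the corresponding symmetrized elliptic problem in $\Omega^{\#}$ with datum $(u_{k-1}+hf_k)^{\#}$. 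A second application, this time of Theorem~\ref{thm.ell.linear.rad} inside the ball $\Omega^{\#}$ to the two rearranged data $(u_{k-1}+hf_k)^{\#}\prec v_{k-1}+h\bar f_k$, gives $\tilde v_k\prec v_k$. Transitivity of $\prec$ closes the induction.

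Finally we let $h\to 0$ with $kh\to t$. The Crandall--Liggett theorem ensures that the piecewise-constant interpolants of $\{u_k\}$ and $\{v_k\}$ converge in $C([0,T];L^1)$ to the mild solutions $u(\cdot,t)$ and $v(\cdot,t)$, respectively; since the relation $\prec$ is preserved under $L^1$ convergence, one obtains $u^{\#}(\cdot,t)\prec v(\cdot,t)$. The $L^p$ bound follows because $\|u(\cdot,t)\|_p=\|u^{\#}(\cdot,t)\|_p$ and $v(\cdot,t)$ is already rearranged, so that $u^{\#}(\cdot,t)\prec v(\cdot,t)$ implies $\|u^{\#}(\cdot,t)\|_p\leq\|v(\cdot,t)\|_p$ via the Hardy--Littlewood inequality.

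The main technical point I expect to need extra care is the passage from the continuous-in-$t$ concentration comparison $f^{\#}(\cdot,t)\prec\bar f(\cdot,t)$ to the discrete comparison $f_k^{\#}\prec\bar f_k$ after the time averaging; this is a Minkowski-type inequality for rearrangements (the rearrangement of an average is controlled in concentration by the average of rearrangements, with equality when the averaged functions are already symmetric about a common origin, as is the case for $\bar f$). Once this is in place, the rest is a routine stepwise iteration of the elliptic comparisons already proved in Section~\ref{sec.ell2}.
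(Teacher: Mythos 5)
Your proposal is correct and takes essentially the same route as the paper, which explicitly states that the theorem follows from Theorems \ref{thm.ell.linear} and \ref{thm.ell.linear.rad} combined with the iteration argument of \cite[Theorem 5.3]{VazVol1} (namely: Crandall--Liggett implicit discretization, stepwise application of the two elliptic comparison results, then passage to the limit). The one auxiliary fact you flag --- that time-averaging preserves the concentration comparison, i.e.\ $f_k^{\#}\prec\bar f_k$ --- is indeed what is needed, and your sketch of it via the identity $\int_0^s g^*\,d\tau=\sup_{|E|=s}\int_E g$ together with the monotonicity of the average of rearranged slices of $\bar f$ is the right argument, so there is no gap.
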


\noindent {\bf Remark.} The parabolic result only covers the linear equation, which is much below our
original expectations, since the elliptic result covers indeed nonlinear equations. We do not know if the lack of nonlinear results is due to a failure of the
expected form of the  theorem, as copied from \eqref{conccomplin}, or is only due to a lack of technique. We remind the reader that in the case of the problem in
the whole space we were able to prove the nonlinear symmetrization result when $A$ is concave, and to show that the corresponding statement for $A$ convex is
false, so that the question
was posed about which kind of statement could be true. Such question is broader in the present case.


\section{Application: an original proof of the fractional Faber-Krahn inequality}\label{sect.fk}

Here we prove the validity of the Faber-Krahn inequality for the fractional Laplacian operator ${\cal L}_2$ defined on bounded domains of $\ren$ with zero
Dirichlet boundary
conditions. This operator  appears often in theory and applications, and is known under the name of {\sl restricted fractional Laplacian}, though we can call it
the {\sl natural
fractional Laplacian}. Unlike the spectral Laplacian  ${\cal L}_1$, the  spectral sequence $\{\lambda_k({\cal L}_2; \Omega)\}_k$ is not directly  related to the
sequence of the standard Laplacian.  However, it is known that the spectrum is discrete
and given by a strictly increasing sequence $\left\{\lambda_{k,\sigma/2}(\Omega)=\lambda_k(\mathcal{L}_{2}; \Omega)\right\}$ (see e.\,g.
\cite{BonfSireVaz2014}).
Our Theorem is then stated as follows:

\begin{theorem}\label{FaberKrahn}
We have
\begin{equation}
\lambda_{1,\sigma/2}(\Omega)\geq\lambda_{1,\sigma/2}(\Omega^{\#})\label{Faber}
\end{equation}
with equality if and only if $\Omega=\Omega^{\#}$, up to translation.
\end{theorem}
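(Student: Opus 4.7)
The plan is to reduce the eigenvalue comparison to a large-time asymptotic statement about the fractional heat semigroup via the parabolic concentration comparison (Theorem~\ref{conccomplin}), and then handle the equality case by going back to the elliptic framework and invoking the rigidity result of Proposition~\ref{equality}.

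For the strict inequality I would pick $\phi_1$ to be the first $L^2(\Omega)$-normalized (positive) eigenfunction of the restricted fractional Laplacian on $\Omega$, and use it as initial datum in the Cauchy-Dirichlet problem \eqref{eqcauchyDirich} with $f\equiv 0$. The unique mild solution is the pure exponential $u(x,t)=e^{-\lambda_{1,\sigma/2}(\Omega)\,t}\phi_1(x)$. On the symmetrized ball $\Omega^{\#}$, let $v$ solve the same problem with initial datum $\phi_1^{\#}$ and zero forcing. Expanding $\phi_1^{\#}=\sum_{k\ge 1} c_k\psi_k$ in an $L^2(\Omega^{\#})$-orthonormal basis of eigenfunctions $\psi_k$ on $\Omega^{\#}$, with eigenvalues $\lambda_{k,\sigma/2}(\Omega^{\#})$, gives
\begin{equation*}
v(x,t)=\sum_{k\ge 1} c_k\,e^{-\lambda_{k,\sigma/2}(\Omega^{\#})\,t}\psi_k(x),\qquad \sum_{k\ge 1} c_k^2=\|\phi_1^{\#}\|_2^2=1.
\end{equation*}
Theorem~\ref{conccomplin} applied with $\overline{u}_0=\phi_1^{\#}$ and $\overline{f}=0$ yields $\|u(\cdot,t)\|_2\le\|v(\cdot,t)\|_2$. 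Squaring and multiplying through by $e^{2\lambda_{1,\sigma/2}(\Omega^{\#})t}$ leads to
\begin{equation*}
e^{-2(\lambda_{1,\sigma/2}(\Omega)-\lambda_{1,\sigma/2}(\Omega^{\#}))t}\le c_1^2+\sum_{k\ge 2}c_k^2\,e^{-2(\lambda_{k,\sigma/2}(\Omega^{\#})-\lambda_{1,\sigma/2}(\Omega^{\#}))t}\le 1,
\end{equation*}
and sending $t\to\infty$ forces $\lambda_{1,\sigma/2}(\Omega)\ge\lambda_{1,\sigma/2}(\Omega^{\#})$.

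The harder part is rigidity. Assuming equality of the two eigenvalues, the same inequality forces $c_1^2=1$ and $c_k=0$ for $k\ge 2$; the sign choice $c_1>0$ is guaranteed because both $\phi_1^{\#}$ and $\psi_1$ are positive on $\Omega^{\#}$. Hence $\phi_1^{\#}=\psi_1$. To convert this identity of functions into geometric rigidity I would view $\phi_1$ as the solution of the elliptic problem
\begin{equation*}
(-\Delta)^{\sigma/2}\phi_1+\phi_1=(\lambda_{1,\sigma/2}(\Omega)+1)\phi_1\ \text{in }\Omega,\qquad \phi_1=0\ \text{in }\R^N\setminus\Omega,
\end{equation*}
which fits Theorem~\ref{thm.ell.linear} with $B(v)=v$ and $f=(\lambda_{1,\sigma/2}(\Omega)+1)\phi_1$. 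Its symmetrized datum is $f^{\#}=(\lambda_{1,\sigma/2}(\Omega^{\#})+1)\psi_1$, and a direct substitution identifies the solution of the symmetrized elliptic problem as $V=\psi_1=\phi_1^{\#}$. Therefore the concentration comparison $\phi_1^{\#}\prec V$ of Theorem~\ref{thm.ell.linear} is saturated for every $s\in[0,|\Omega|]$, and Proposition~\ref{equality} delivers $\Omega=\Omega^{\#}$ up to a translation. The main obstacle I anticipate is precisely this bridging step: the parabolic asymptotics only produce the identity $\phi_1^{\#}=\psi_1$, and extracting the shape rigidity from it requires reinterpreting the eigenvalue equation in the elliptic rearrangement framework so that Proposition~\ref{equality} becomes applicable; the remainder of the argument is a routine spectral-decay calculation once Theorem~\ref{conccomplin} is in hand.
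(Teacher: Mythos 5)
Your argument follows the same route as the paper: the inequality comes from feeding the first eigenfunction $\phi_1$ into the Cauchy--Dirichlet problem, applying the parabolic concentration comparison of Theorem~\ref{conccomplin}, expanding $v$ spectrally on $\Omega^{\#}$ and letting $t\to\infty$; the rigidity is obtained from $\phi_1^{\#}=\psi_1$ and Proposition~\ref{equality}. Where the paper simply asserts that $\phi_1^{\#}=\psi_1$ ``is enough'' to invoke Proposition~\ref{equality}, you have supplied the missing link explicitly: recasting the eigenvalue equation as $(-\Delta)^{\sigma/2}\phi_1+\phi_1=(\lambda_{1,\sigma/2}(\Omega)+1)\phi_1$ puts it in the form \eqref{eq.1} with $B(v)=v$ and nonnegative datum, and $V=\psi_1$ solves the symmetrized problem \eqref{eq.4} (using $\lambda_{1,\sigma/2}(\Omega)=\lambda_{1,\sigma/2}(\Omega^{\#})$ and $\phi_1^{\#}=\psi_1$ to identify $f^{\#}$), so the comparison $\phi_1^{\#}\prec V$ of Theorem~\ref{thm.ell.linear} is saturated and Proposition~\ref{equality} applies. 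That bridging device, and the choice $B(v)=v$ so as to meet the hypothesis that $B$ be strictly increasing, are exactly what the paper's one-line remark leaves to the reader; the rest of your proof coincides with theirs.
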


The proof we present here is completely elementary and uses neither the variational characterization \eqref{firsteigenv} nor the nonlocal P\'{o}lya-Sz\"{e}go
inequality as in \cite{BrascoParini}, but only the concentration comparison provided by Theorem \ref{conccomplin} and the asymptotic definition of
$\lambda_{1,\sigma/2}(\Omega)$, which
can be derived by the decay rate of the parabolic problem $u_t+{\cal L}_{2} u=0$ .
\normalcolor

\noindent {\sl Proof of Theorem \ref{FaberKrahn}.\,\,}
Suppose that $\left\{\psi_{k,\sigma/2,\Omega}(x)\right\}_k$ are the ($L^{2}$ normalized) eigenfunctions of $\mathcal{L}_{2}$ and let us consider the function
\[
u(x,t)=e^{-\lambda_{1,\sigma/2}(\Omega)\,t}\,\psi_{1,\sigma/2,\Omega}(x)
\]
solving the problem
\begin{equation} \label{eqcauchyDirich}
\left\{
\begin{array}
[c]{lll}%
u_t+(-\Delta)^{\sigma/2}u=0  &  & x\in\Omega\,,t>0%
\\[6pt]
u=0 &  & x\in\R^{N}\setminus\Omega\,,t>0\\[6pt]
u(x,0)=\psi_{1,\sigma/2,\Omega}(x) &  & x\in\Omega.
\end{array}
\right. %
\end{equation}
By Theorem \ref{conccomplin} we find, for all $t>0$,
\[
\|u(\cdot,t)\|_{2}\leq \|v(\cdot,t)\|_{2},
\]
where $v$ solves the problem
\begin{equation} \label{eqcauchysymm.f}
\left\{
\begin{array}
[c]{lll}%
v_t+(-\Delta)^{\sigma/2}v=0  &  & x\in\Omega^{\#}\,, \ t>0,%
\\[6pt]
v=0 &  & x\in\R^{N}\setminus\Omega^{\#}\,,t>0
\\[6pt]
v(x,0)=\psi^{\#}_{1,\sigma/2,\Omega}(x) &  & x\in\Omega^{\#}.
\end{array}
\right. %
\end{equation}
Since we have
\begin{equation}
\|u(\cdot,t)\|_{2}=e^{-\lambda_{1,\sigma/2}(\Omega)\,t}\label{comparisonFaber}
\end{equation}
and the expression for $v$ can be given in terms of superposition, namely
\[
v(x,t)=\sum_{k=1}^{\infty}\langle \psi^{\#}_{1,\sigma/2,\Omega},\psi_{k,\sigma/2,\Omega^{\#}}\rangle_{L^{2}(\Omega^{\#})}\,
e^{-\lambda_{k,\sigma/2}(\Omega^{\#})\,t}\,\psi_{k,\sigma/2,\Omega^{\#}}(x)
\]
thus
\begin{align*}
&\|v(\cdot,t)\|^{2}_{2}=\sum_{k=1}^{\infty}|\langle \psi^{\#}_{1,\sigma/2,\Omega},\psi_{k,\sigma/2,\Omega^{\#}}\rangle|^{2}_{L^{2}(\Omega^{\#})}\,
e^{-2\lambda_{k,\sigma/2}(\Omega^{\#})\,t}\\
&\leq e^{-2\lambda_{1,\sigma/2}(\Omega^{\#})\,t}\sum_{k=1}^{\infty} |\langle
\psi^{\#}_{1,\sigma/2,\Omega},\psi_{k,\sigma/2,\Omega^{\#}}\rangle|^{2}_{L^{2}(\Omega^{\#})}\\
&=e^{-2\lambda_{1,\sigma/2}(\Omega^{\#})\,t}\|\psi^{\#}_{1,\sigma/2,\Omega}\|^{2}_{L^{2}(\Omega^{\#})}
=e^{-2\lambda_{1,\sigma/2}(\Omega^{\#})\,t}\,.
\end{align*}
This together with \eqref{comparisonFaber} implies \eqref{Faber}.

\medskip

\noindent {\sl The case of equality.}
Analyzing the last list of inequalities that starts by  $\|v(\cdot,t)\|^{2}_{2}$
we conclude that in the case where $\lambda_{1,\sigma/2}(\Omega)=\lambda_{1,\sigma/2}(\Omega^{\#})$ we necessarily have
\[
\|v(\cdot,t)\|_{L^{2}(\Omega^{\#})}=\|u(\cdot,t)\|_{L^{2}(\Omega)}=e^{-\lambda_{1,\sigma/2}(\Omega^{\#})\,t}
\]
so that we conclude that the coefficients of the Fourier expansion of $v_0=v(\cdot,0)$ in terms of eigenfunctions are all zero but the first, in view of the
known  fact that the first
eigenvalue $\lambda_{1,\sigma/2}(\Omega^{\#})$ is simple. This means that $v_0(x)=c \psi_{1,\sigma/2,\Omega^{\#}}$, with a constant $c>0$. By normalization we
get
\begin{equation}
\psi^{\#}_{1,\sigma/2,\Omega}(x)=\psi_{1,\sigma/2,\Omega^{\#}}(x)\,.
\end{equation}
 But this is enough to apply the important Proposition \ref{equality} and obtain $\Omega=\Omega^{\#}$, and the result ends as before. \qed

\medskip

\subsection{A variational proof} A direct proof of the FKI for our  operator and similar can be  based on the variational interpretation of the first eigenvalue,
since it  can be written as the minimizer of the Rayleigh quotient, where the local $L^{2}$ gradient energy norm is
replaced by the Gagliardo seminorm (see Section \ref{sec.prelim} for some details).

Suppose that $\left\{\psi_{k,\sigma/2,\Omega}(x)\right\}_k$ are the ($L^{2}$ normalized) eigenfunctions of $\mathcal{L}_{2}$.
As already mentioned in the introduction, the proof of Theorem \ref{FaberKrahn} is a direct consequence of the variational characterization of
$\lambda_{1,\sigma/2}(\Omega)$. Indeed, we know by \cite{SerValvariat} that
\begin{equation}
\lambda_{1,\sigma/2}(\Omega)=\min_{\substack{u\in H^{\sigma/2}(\ren)\setminus\left\{0\right\}\\ u=0\text{ on
}\ren\setminus\Omega}}\frac{\displaystyle{\int_{\R^{N}}\int_{\R^{N}}\dfrac{|u(x)-u(y)|^{2}}{|x-y|^{N+\sigma}}\,dxdy}}{\displaystyle
{\int_{\Omega}u^{2}dx}}.\label{firsteigenv}
\end{equation}
Then we could use the nonlocal (hence fractional) version of the P\'{o}lya-Sz\"{e}go inequality (see for instance \cite{Park}) to see that replacing $u$ with
$u^{\#}$ makes the Gagliardo seminorm in \eqref{firsteigenv} decrease, therefore \eqref{Faber} holds. Furthermore if equality occurs in \eqref{Faber}, the
minimality of $\lambda_{1,\sigma/2}(\Omega^{\#})$ implies that $\psi^{\#}_{1,\sigma/2,\Omega}(x)$ is an eigenfunction, but since the eigenvalue
$\lambda_{1,\sigma/2}(\Omega^{\#})$ is also simple, by normalization we have
\[
\psi^{\#}_{1,\sigma/2,\Omega}(x)=\psi_{1,\sigma/2,\Omega^{\#}}(x)
\]
which the same result in \cite{Park} shows to be possible only when $\Omega=\Omega^{\#}$ and $\psi_{1,\sigma/2,\Omega}=\psi^{\#}_{1,\sigma/2,\Omega}$ up to
translation.

\medskip

\noindent {\bf  More general version of the FKI.} Actually,   Brasco et al. \cite{BrascoParini} are able to establish a more general version of the
FKI, which applies to a nonlinear variant of the fractional Laplacian, namely the fractional $p-$Laplacian. The main argument of the general variational proof is
the the use of a nonlocal P\'{o}lya-Sz\"{e}go inequality, proved in \cite{Frank}, to estimate the first nonlinear eigenvalue. 

\medskip

\normalcolor
\noindent {\bf Probabilistic  approach.} The Faber-Krahn inequality for the fractional Laplacian with Dirichlet data on a bounded domain of $\ren$ is stated
with
a hint of the proof based on probabilistic arguments as the last result, Theorem 5, of \cite{Banuelos}. This means that the eigenvalues are also characterized
in
terms of the evolution, in their case the stochastic process of Levy type.
Another proof with probabilistic flavor can be found in \cite{Betsakos}.\normalcolor

\section{Appendices} \label{sec.prelim}
\setcounter{equation}{0}

\noindent {\bf I. On symmetrization}

\noindent We gather here some basic information on symmetrization that can be useful to read the paper. We follow standard notations used in the literature, and
we recall that we have presented a more detailed account in \cite{VazVol1}. A measurable real function $f$ defined on $\R^{N}$ is called \emph{radially
symmetric} (\emph{radial},  for short) if there is a function
$\widetilde{f}:[0,\infty)\rightarrow \R$ such that $f(x)=\widetilde{f}(|x|)$ for all $x\in \R^{N}$. We will often write $f(x)=f(r)$, $r=|x|\ge0$ for such
functions by abuse of notation. We say that $f$ is \emph{rearranged} if it is radial, nonnegative and $\widetilde{f}$ is a right-continuous,
non-increasing function of $r>0$. A similar definition can be applied for real functions defined on a ball $B_{R}(0)=\left\{x\in\R^{N}:|x|<R\right\}$.

If  $\Omega$ is an open set of $\mathbb{R}^{N}$ and $f$ is a real measurable function on $\Omega$. We will denote by $\left\vert \cdot\right\vert $ the
$N$-dimensional Lebesgue measure. We define the
\emph{distribution function} $\mu_{f}$ of $f$ as%
\[
\mu_{f}\left(  k\right)  =\left\vert \left\{  x\in\Omega: \vert f\left(
x\right)\vert>k\right\}  \right\vert \text{ , }k\geq0,
\]
and the \emph{decreasing rearrangement} of $f$ as%
\[
f^{\ast}\left(  s\right)  =\sup\left\{ k\geq0:\mu_{f}\left(  k\right)
>s\right\}  \text{ , }s\in\left(  0,\left\vert \Omega\right\vert \right).
\]
We may also  think of extending $f^{\ast}$ as the  zero function in $[|\Omega|,\infty)$ if $\Omega$ is bounded. From this definition it turns out
that $\mu_{f^{\ast}}=\mu_{f}$ (\emph{i.\,e.\,,} $f$, and $f^{\ast}$ are equi-distributed) and $f^{\ast}$ is exactly  the \emph{generalized
inverse} of
$\mu_{f}$.
Furthermore, if $\omega_{N\text{ }}$ is the measure of the unit ball in $%
\mathbb{R}
^{N}$ and $\Omega^{\#}$ is the ball of $%
\mathbb{R}
^{N}$ centered at the origin having the same Lebesgue measure as $\Omega,$ we define the
function
\[
f^{\#}\left(  x\right)  =f^{\ast}(\omega_{N}\left\vert x\right\vert
^{N})\text{ \ , }x\in\Omega^{\#},
\]
that will be called \emph{spherical decreasing rearrangement} of $f$. From this definition it follows that $f$ is rearranged if and only if $f=f^{\#}$.

Rearranged functions have a number of interesting properties. Here, we just recall
the conservation of the
$L^{p}$
norms (coming from the definition of rearrangements and the classical \emph{Cavalieri principle}): for all $p\in[1,\infty]$
\[
\|f\|_{L^{p}(\Omega)}=\|f^{\ast}\|_{L^{p}(|0,\Omega|)}=\|f^{\#}\|_{L^{p}(\Omega^{\#})}\,,
\]
as well as the classical Hardy-Littlewood inequality (see \cite{MR0046395})%
\begin{equation}
\int_{\Omega}\left\vert f\left(  x\right)  g\left(
x\right)  \right\vert dx\leq\int_{0}^{\left\vert \Omega\right\vert }f^{\ast
}\left(  s\right)  g^{\ast}\left(  s\right)  ds=\int_{\Omega^{\#}}f^{\#}(x)\,g^{\#}(x)\,dx\,.
\label{HardyLit}%
\end{equation}

\noindent $\bullet$ We will often deal with two-variable functions of the type
\begin{equation}\label{f}%
f:\left(  x,y\right)  \in\mathcal{C}_{\Omega}\rightarrow f\left(  x,y\right)
\in{\mathbb{R}}
\end{equation}
defined on the cylinder  $\mathcal{C}%
_{\Omega}:=\Omega\times\left(  0,+\infty\right)  $, and measurable with respect to
$x.$ Here $\Omega$ can be a bounded domain or $\ren$. For such functions, it will be convenient to define the so-called {\sl Steiner symmetrization} of
$\mathcal{C}_{\Omega}$ with
respect to the variable $x$, namely the set \ \hbox{$\mathcal{C}_{\Omega}^{\#}:=\Omega
^{\#}\times\left(  0,+\infty\right).$} Furthermore, we will denote by $\mu
_{f}\left(  k,y\right)  $ and $f^{\ast}\left(  s,y\right)  $ the distribution
function and the decreasing rearrangements of (\ref{f}), with respect to $x$
for $y$ fixed, and we will also define the function%
\[
f^{\#}\left(  x,y\right)  =f^{\ast}(\omega_{N}|x|^{N},y)
\]
which is called the \emph{Steiner symmetrization} of $f$, with respect to the line
$x=0.$ Clearly, $f^{\#}$ is a spherically symmetric and decreasing function
with respect to $x$, for any fixed $y$.

\noindent $\bullet$ There are  some interesting differentiation formulas which turn out to be very useful in our approach. Typically, they are used when one
wants to get sharp estimates satisfied by the rearrangement $u^{\ast}$ of a solution $u$ to a certain  and it becomes
crucial to differentiate with respect to the extra variable $y$ (introduced in the extension process that is used in fractional operators) in the form
\[
\int_{\{u(x,y)>u^{*}(s,y)\}}\frac{\partial u}{\partial y}(x,y)\,dx\,.
\]
We recall here two formulas that have  been already used in \cite{dBVol} and \cite{VazVol1}

\begin{proposition}\label{BANDLE}
Suppose that $f\in H^{1}(0,T;L^{2}(\Omega))$ for some $T>0$ and $f$ is nonnegative. Then $$f^{*}\in H^{1}(0,T;L^{2}(0,|\Omega|))$$ and if \
$|\left\{f(x,t)=f^{*}(s,t)\right\}|=0$ \
for
a.e. $(s,t)\in(0,|\Omega|)\times(0,T)$, the following differentiation formula holds:
\begin{equation}
\int_{f(x,y)>f^{*}(s,y)}\frac{\partial f}{\partial y}(x,y)\,dx=\int_{0}^{s}\frac{\partial f^{*}}{\partial y}(\tau,y)\,d\tau.\label{Rakotoson}
\end{equation}
\end{proposition}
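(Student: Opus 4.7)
The plan is to exploit the identity
\[
\int_0^s f^{*}(\tau,y)\,d\tau \;=\; \int_{\{x\in\Omega:\, f(x,y)>f^{*}(s,y)\}} f(x,y)\,dx,
\]
which is valid precisely under the non-degeneracy hypothesis $|\{f(\cdot,y)=f^{*}(s,y)\}|=0$ (since in that case the super-level set $E(s,y):=\{f(\cdot,y)>f^{*}(s,y)\}$ has Lebesgue measure exactly $s$, and the layer-cake formula gives the equality). Differentiating this identity in $y$ would yield the desired result, provided one can interchange the $y$-derivative with the integral over the moving domain $E(s,y)$.

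Before differentiating, I would first establish that $f^{*}\in H^{1}(0,T;L^{2}(0,|\Omega|))$. The key fact here is the classical $L^{2}$-contraction property of decreasing rearrangement,
\[
\|f^{*}(\cdot,y_1)-f^{*}(\cdot,y_2)\|_{L^{2}(0,|\Omega|)}\leq \|f(\cdot,y_1)-f(\cdot,y_2)\|_{L^{2}(\Omega)},
\]
which immediately transfers the $H^{1}$-in-$y$ regularity of $f$ to $f^{*}$, and gives $\partial_y f^{*}\in L^{2}((0,|\Omega|)\times(0,T))$ as a weak derivative obtained as the $L^{2}$-limit of difference quotients.

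Next, I would compute the $y$-derivative of the right-hand side of the key identity via difference quotients
\[
\frac{1}{h}\Bigl[\int_{E(s,y+h)} f(x,y+h)\,dx - \int_{E(s,y)} f(x,y)\,dx\Bigr]
\]
by splitting it into a bulk part $\int_{E(s,y+h)} \frac{f(\cdot,y+h)-f(\cdot,y)}{h}\,dx$ and two boundary parts living on the symmetric difference $E(s,y+h)\triangle E(s,y)$. The bulk part converges to $\int_{E(s,y)}\partial_y f(x,y)\,dx$ by dominated convergence and the continuity of $y\mapsto \mathbf 1_{E(s,y)}$ in $L^{2}$ (which follows from the continuity of $y\mapsto f^{*}(s,y)$ combined with the non-degeneracy). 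The boundary terms can be handled by observing that on $E(s,y+h)\triangle E(s,y)$ the function $f(\cdot,y)$ is close to the common threshold value $f^{*}(s,y)$, so their sum is essentially $f^{*}(s,y)\bigl(|E(s,y+h)|-|E(s,y)|\bigr)/h$, which vanishes as $h\to 0$ since $|E(s,y)|\equiv s$ is independent of $y$ under the non-degeneracy hypothesis.

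The main obstacle is making the vanishing of the boundary contribution rigorous without assuming more regularity on the level sets. The cleanest way I would try is to use an equivalent representation via the distribution function,
\[
\int_{E(s,y)}f(x,y)\,dx \;=\; \int_{f^{*}(s,y)}^{\infty}\mu_f(k,y)\,dk,
\]
differentiate inside the integral in $k$ and $y$, and then use $\mu_f(f^{*}(s,y),y)=s$ (again by non-degeneracy) together with the fact that $s$ is independent of $y$; the boundary term coming from the moving lower limit $k=f^{*}(s,y)$ produces exactly $s\,\partial_y f^{*}(s,y)$, which when integrated in $s$ matches the formal Leibniz cancellation. Equating the two expressions for $\partial_y\!\int_0^s f^{*}(\tau,y)\,d\tau$ then yields \eqref{Rakotoson}.
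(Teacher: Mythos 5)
The paper does not actually prove Proposition~\ref{BANDLE}; it is stated without proof as a recalled formula from \cite{dBVol} and \cite{VazVol1} (ultimately tracing back to the Mossino--Rakotoson line of differentiation results). So the comparison here is with the standard argument rather than an in-paper proof. Your overall plan --- the key identity $\int_0^s f^{*}(\tau,y)\,d\tau=\int_{E(s,y)}f(x,y)\,dx$, the $L^2$-contraction of rearrangement to get $f^{*}\in H^{1}(0,T;L^{2})$, and the bulk/boundary split exploiting $|E(s,y)|\equiv s$ --- is the right shape, and you correctly identify that justifying the vanishing of the boundary contribution with only $H^1$-in-$y$ regularity is the crux.

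Two concrete problems remain, though. First, the layer-cake identity you write is missing a term: the correct statement is
\begin{equation*}
\int_{E(s,y)}f(x,y)\,dx \;=\; s\,f^{*}(s,y) \;+\; \int_{f^{*}(s,y)}^{\infty}\mu_f(k,y)\,dk
\;=\; \int_{\Omega}\bigl(f(x,y)-f^{*}(s,y)\bigr)_{+}\,dx \;+\; s\,f^{*}(s,y).
\end{equation*}
Second, and more seriously, passing to $\mu_f$ does not in fact circumvent the moving-domain difficulty, because $\partial_y\mu_f(k,y)$ is itself the derivative of the measure of a moving set; you have just relocated the problem, not solved it, and the sentence about ``the formal Leibniz cancellation when integrated in $s$'' does not produce a valid argument. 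The clean fix uses the second form of the corrected identity above: write $\int_0^s f^{*}(\tau,y)\,d\tau = \int_\Omega (f(x,y)-f^{*}(s,y))_+\,dx + s\,f^{*}(s,y)$, then apply the chain rule for the positive part of an $H^1(0,T)$ function in the $y$-variable (valid a.e.\ in $x$, then integrate; this is where the hypothesis $|\{f(\cdot,y)=f^{*}(s,y)\}|=0$ enters, to make $\mathbf{1}_{\{f>f^{*}(s,y)\}}$ well-defined $dx$-a.e.). This gives
\begin{equation*}
\partial_y\!\int_\Omega (f-f^{*}(s,y))_+\,dx=\int_{E(s,y)}\partial_y f\,dx - s\,\partial_y f^{*}(s,y),
\end{equation*}
and the $s\,\partial_y f^{*}(s,y)$ contributions cancel when you add back $\partial_y\bigl(s\,f^{*}(s,y)\bigr)$, yielding \eqref{Rakotoson} with no boundary term ever appearing. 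This is the mechanism by which the boundary contribution ``vanishes'' in the cited proofs, and it is both simpler and more robust than the symmetric-difference estimates you sketch.
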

The second-order differentiation formula is

\begin{proposition}
\label{Ferone-Mercaldo} Let $f$ nonnegative and $f\in W^{2,\infty}\left(  \mathcal{C}_{\Omega}\right)  $. Then for almost every $y\in(0,+\infty)$ the following
 differentiation formula holds:
\begin{align*}
\int_{f\left(  x,y\right)  >f^{\ast}\left(  s,y\right)  }& \frac{\partial^{2}%
f}{\partial y^{2}}\left(  x,y\right)  dx    =\frac{\partial^{2}}{\partial
y^{2}}\int_{0}^{s}f^{\ast}\left(  \tau,y\right)  d\tau-\int_{f\left(
x,y\right)  =f^{\ast}\left(  s,y\right)  }\frac{\left(  \frac{\partial
f}{\partial y}\left(  x,y\right)  \right)  ^{2}}{\left\vert \nabla
_{x}f\right\vert }\,d\mathcal{H}^{N-1}\left(  x\right) \\
&  \!\!\!+\left(  \int_{f\left(  x,y\right)  =f^{\ast}\left(  s,y\right)
}\!\frac{\frac{\partial f}{\partial y}\left(  x,y\right)  }{\left\vert
\nabla_{x}f\right\vert }\,d\mathcal{H}^{N-1}\left(  x\right)  \!\right)
^{2}\!\left(  \!\int_{f\left(  x,y\right)  =f^{\ast}\left(  s,y\right)
}\!\frac{1}{\left\vert \nabla_{x}f\right\vert }\,d\mathcal{H}^{N-1}\left(
x\right)  \!\right)  ^{-1}\!.
\end{align*}
\end{proposition}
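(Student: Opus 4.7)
The plan is to obtain the second-order identity by differentiating the first-order Bandle--Rakotoson formula \eqref{Rakotoson} once more with respect to $y$, and to identify the extra boundary terms that come out of the moving super-level set via a Reynolds transport argument. Throughout, the regularity $f \in W^{2,\infty}(\mathcal{C}_\Omega)$ together with the Morse/Sard-type hypothesis implicit in Proposition \ref{BANDLE} (i.e.\ $|\{f(\cdot,y) = f^*(s,y)\}| = 0$, and $|\nabla_x f| > 0$ $\mathcal{H}^{N-1}$-a.e.\ on the level set for a.e.\ $s$) will be used to legitimate the coarea computations.

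First I would set
\[
F(y) := \int_0^s \frac{\partial f^*}{\partial y}(\tau,y)\,d\tau \;=\; \int_{E(s,y)} \frac{\partial f}{\partial y}(x,y)\,dx, \qquad E(s,y) := \{x \in \Omega : f(x,y) > f^*(s,y)\},
\]
where the second equality is exactly Proposition \ref{BANDLE}. Differentiating the left-hand expression once more in $y$ yields the term $\dfrac{\partial^{2}}{\partial y^{2}}\int_0^s f^*(\tau,y)\,d\tau$, which is the first piece of the right-hand side of the stated formula. For the right-hand expression, the difficulty is that the domain $E(s,y)$ depends on $y$ through the level $c(y) := f^*(s,y)$, so I invoke the Reynolds transport theorem. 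The outward unit normal to $E(s,y)$ is $n = -\nabla_x f/|\nabla_x f|$, and parameterizing a boundary point by $f(x(y),y) = c(y)$ gives the outward normal velocity $v_n = (\partial_y f - \dot c)/|\nabla_x f|$. Reynolds then produces
\[
F'(y) = \int_{E(s,y)} \frac{\partial^{2} f}{\partial y^{2}}\,dx + \int_{\{f=f^*(s,y)\}} \frac{(\partial_y f)^2}{|\nabla_x f|}\,d\mathcal{H}^{N-1} - \dot c(y)\int_{\{f=f^*(s,y)\}} \frac{\partial_y f}{|\nabla_x f|}\,d\mathcal{H}^{N-1}.
\]

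The key remaining step is to express $\dot c(y) = \partial_y f^*(s,y)$ in terms of level-set integrals so that the third term above becomes the quotient in the statement. For this I would differentiate the Bandle identity with respect to $s$ at fixed $y$: the left side gives $\partial_y f^*(s,y)$, while on the right the coarea formula (together with the classical rearrangement identity $\partial_s f^*(s,y) = -\bigl(\int_{\{f=f^*\}} |\nabla_x f|^{-1} d\mathcal{H}^{N-1}\bigr)^{-1}$) gives
\[
\frac{\partial}{\partial s} \int_{E(s,y)} \frac{\partial f}{\partial y}\,dx = -\,\partial_s f^*(s,y)\int_{\{f=f^*(s,y)\}} \frac{\partial_y f}{|\nabla_x f|}\,d\mathcal{H}^{N-1}.
\]
Equating and solving produces the representation
\[
\partial_y f^*(s,y) = \left(\int_{\{f=f^*\}} \frac{\partial_y f}{|\nabla_x f|}\,d\mathcal{H}^{N-1}\right)\!\left(\int_{\{f=f^*\}}\frac{1}{|\nabla_x f|}\,d\mathcal{H}^{N-1}\right)^{-1}.
\]
Substituting this expression for $\dot c(y)$ into the Reynolds identity and rearranging yields exactly the claimed formula.

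The main obstacle I expect is the rigorous justification of the Reynolds/coarea manipulations at the level of almost-every $y$: one must show that the set $\{f(\cdot,y) = f^*(s,y)\}$ has zero Lebesgue measure (so the interior integral is unambiguous), that $|\nabla_x f|$ is nonzero $\mathcal{H}^{N-1}$-a.e.\ on that level set (so the boundary integrals make sense), and that the level $c(y)$ is differentiable in $y$ as well as the level sets vary absolutely continuously. These points follow from the $W^{2,\infty}$ regularity and the non-degeneracy hypothesis under which Proposition \ref{BANDLE} is stated, combined with Sard's theorem applied to a.e.\ $y$; the remaining computations, though technical, are of standard coarea type.
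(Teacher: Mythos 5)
The paper does not actually prove this proposition: it is recalled verbatim from \cite{dBVol} and \cite{VazVol1} (the label credits Ferone--Mercaldo), so there is no in-paper argument to compare against. Your derivation is the standard one and its computational content is correct: differentiating the first-order formula \eqref{Rakotoson} in $y$, applying a transport identity on the moving super-level set with normal velocity $(\partial_y f-\dot c)/|\nabla_x f|$, and eliminating $\dot c=\partial_y f^*(s,y)$ via the implicit relation $\mu_f(f^*(s,y),y)=s$ (equivalently your $\partial_s$-differentiation of the Bandle identity) reproduces exactly the three terms of the statement with the correct signs; as a sanity check, Cauchy--Schwarz on the last two terms yields the inequality $\int_{\{f>f^*\}}\partial_y^2 f\,dx\le \partial_y^2\int_0^s f^*\,d\tau$ that is what Proposition \ref{equality} actually uses. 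The one point you should state more carefully is the non-degeneracy of the level sets: for $f\in W^{2,\infty}$, i.e.\ only $C^{1,1}$ in $x$, Sard's theorem does \emph{not} guarantee that $|\nabla_x f|>0$ $\mathcal{H}^{N-1}$-a.e.\ on $\{f(\cdot,y)=f^*(s,y)\}$ when $N\ge 2$ (the set of critical values of a $C^{1,1}$ function can have positive measure), nor that $|\{f(\cdot,y)=f^*(s,y)\}|=0$; these must be imposed as hypotheses, as they implicitly are by the very appearance of $|\nabla_x f|^{-1}$ in the formula, rather than deduced. Relatedly, the literature versions justify the "Reynolds" step by a difference-quotient argument valid for a.e.\ $y$ rather than by invoking a transport theorem for smoothly moving domains, but this is a matter of rigor, not of substance: your outline is the right proof.
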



\noindent $\bullet$  {\sl Mass concentration.}
We will provide estimates of the solutions of our elliptic and parabolic problems in terms of their integrals. For that purpose, the following
definition, taken from   \cite{Vsym82}, is remarkably useful.

\begin{definition}
Let $f,g\in L^{1}_{loc}(\R^{N})$ be two radially symmetric functions on $\R^{N}$. We say that $f$ is less concentrated than $g$, and we write
$f\prec
g$ if for
all $R>0$ we get
\[
\int_{B_{R}(0)}f(x)dx\leq \int_{B_{R}(0)}g(x)dx.
\]
\end{definition}
The partial order relationship $\prec$ is called \emph{comparison of mass concentrations}.
Of course, this definition can be suitably adapted if $f,g$ are radially symmetric and locally integrable functions on a ball $B_{R}$. Besides, if
$f$
and $g$ are locally integrable on a general open set $\Omega$, we say that $f$ is less concentrated than $g$ and we write again $f\prec g$ simply if
$f^{\#}\prec g^{\#}$,  but this extended definition has no use if $g$ is not rearranged.

The comparison of mass concentrations enjoys a nice equivalent formulation if $f$ and $g$ are rearranged, whose proof we refer to  \cite{MR0046395},
\cite{Chong}, \cite{VANS05}:

\begin{lemma}\label{lemma1}
Let $f,g\in L^{1}(\Omega)$ be two rearranged functions on a ball $\Omega=B_{R}(0)$. Then $f\prec g$ if and only if for every convex
nondecreasing
function
$\Phi:[0,\infty)\rightarrow [0,\infty)$ with $\Phi(0)=0$ we have
\begin{equation}
\int_{\Omega}\Phi(f(x))\,dx\leq \int_{\Omega}\Phi(g(x))\,dx.
\end{equation}
This result still holds if $R=\infty$ and $f,g\in L^{1}_{loc}(\R^{N})$ with $g\rightarrow0$ as $|x|\rightarrow\infty$.
\end{lemma}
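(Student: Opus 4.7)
The plan is to reduce the equivalence to the single family of truncation functions $\Phi_k(t)=(t-k)_+$ for $k\ge 0$, and then exploit a Legendre-type duality between the concave primitive $F(t)=\int_0^t f^*(\sigma)\,d\sigma$ and the truncated integrals $k\mapsto \int_\Omega (f-k)_+\,dx$. Since $f=f^\#$ and $g=g^\#$, the concentration condition $f\prec g$ is equivalent to $F(t)\le G(t)$ for all $t\in(0,|\Omega|]$, where $G$ is the analogous primitive for $g$; both $F,G$ are concave (their derivatives $f^*,g^*$ are nonincreasing), so they will interact nicely with Legendre transforms.

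First I would establish the key identity: for every $k\ge 0$,
\begin{equation*}
\int_\Omega (f-k)_+\,dx \;=\; \int_0^{|\Omega|}(f^*(\sigma)-k)_+\,d\sigma \;=\; \sup_{t\in[0,|\Omega|]}\bigl(F(t)-kt\bigr),
\end{equation*}
the first equality by equimeasurability and the second because $\partial_t(F(t)-kt)=f^*(t)-k$ changes sign at $t=\inf\{\sigma:f^*(\sigma)\le k\}$. The analogous identity holds for $g$. For the $(\Rightarrow)$ direction, note that $F\le G$ immediately yields $\sup_t(F-kt)\le \sup_t(G-kt)$, so $\int_\Omega(f-k)_+\,dx\le \int_\Omega(g-k)_+\,dx$ for every $k\ge 0$. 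To extend from the truncation family to a general convex nondecreasing $\Phi$ with $\Phi(0)=0$, I would use the standard integral representation
\begin{equation*}
\Phi(t)=\Phi'(0^+)\,t+\int_0^\infty (t-k)_+\,d\nu(k),\qquad d\nu=\Phi''\ge 0,
\end{equation*}
and apply Fubini: the contribution of the linear piece is controlled by choosing $k=0$, i.e.\ $\int_\Omega f\,dx=F(|\Omega|)\le G(|\Omega|)=\int_\Omega g\,dx$, and each truncation contribution is controlled by the identity above.

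For the $(\Leftarrow)$ direction, specializing the integral hypothesis to $\Phi=\Phi_k$ gives $\sup_t(F(t)-kt)\le\sup_t(G(t)-kt)$ for every $k\ge 0$. Since $F$ is concave with $F(0)=0$ on $[0,|\Omega|]$, it coincides with its Legendre biconjugate, i.e.
\begin{equation*}
F(t)=\inf_{k\ge 0}\Bigl(\sup_{\tau}(F(\tau)-k\tau)+kt\Bigr),
\end{equation*}
and similarly for $G$. Monotonicity of the inner suprema in $k$ then propagates to $F(t)\le G(t)$ for all $t\in[0,|\Omega|]$, which is exactly $f\prec g$.

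The step that requires most care is the representation of a general convex nondecreasing $\Phi$ with $\Phi(0)=0$ as a superposition of the hinge functions $(t-k)_+$ plus a linear term, together with a Fubini argument ensuring the interchange of integrations is justified for $f,g\in L^1(\Omega)$; this is standard but needs the tail behaviour $|\{f>k\}|\to 0$ and $\int_0^\infty \min(k,\|f\|_\infty)\,d\nu(k)<\infty$ type bounds, handled by truncating $\nu$ and passing to a monotone limit. The case $R=\infty$ with $f,g\in L^1_{\mathrm{loc}}(\mathbb R^N)$ and $g\to 0$ at infinity proceeds identically, since $g(x)\to 0$ ensures that $(g-k)_+$ has finite integral for every $k>0$ and that the biconjugate formula for $G$ remains valid on $[0,\infty)$.
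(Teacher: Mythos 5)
The paper does not actually prove this lemma: it is stated as a known fact with references to Hardy--Littlewood--P\'olya, Chong and \cite{VANS05}. Your argument is correct and is, in substance, the classical proof one finds in those references. The forward direction via the representation $\Phi(t)=\Phi'(0^+)t+\int_0^\infty (t-k)_+\,d\nu(k)$, $\nu=\Phi''$, together with Tonelli is the standard reduction to hinge functions; note that since every term is nonnegative, Tonelli alone suffices and the inequality holds in $[0,+\infty]$, so the truncation of $\nu$ you worry about is not actually needed. Your Legendre-biconjugate treatment of the converse is an elegant repackaging of the usual trick: for fixed $s$ one tests with $k=g^*(s)$, so that $\int_0^\infty(g^*-k)_+\,d\sigma=G(s)-ks$ exactly, and then $F(s)\le \int_0^\infty(f^*-k)_+\,d\sigma+ks\le G(s)$; this is precisely your $\inf_k$ evaluated at the optimal $k$. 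Two points deserve to be made explicit. First, the restriction to $k\ge 0$ in the biconjugate identity is legitimate only because $F$ is nondecreasing, so its superdifferential consists of nonnegative slopes; you should say this. Second, at a point $t_0$ where $g^*(t_0)=0$ there is no positive optimal $k$, and one must instead let $k\downarrow 0$ and use monotone convergence $\int(g^*-k)_+\,d\sigma\uparrow \int g^*\,d\sigma=G(t_0)$; this is exactly where the hypothesis $g\to 0$ at infinity (which guarantees $(g-k)_+$ has bounded support, hence finite integral, for each $k>0$) is used in the case $R=\infty$, so ``proceeds identically'' slightly undersells the one step that genuinely depends on that hypothesis.
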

From this Lemma it easily follows that if $f\prec g$ and $f,g$ are rearranged, then
\begin{equation}
\|f\|_{L^{p}(\Omega)}\leq \|g\|_{L^{p}(\Omega)}\quad \forall p\in[1,\infty].
\end{equation}

\noindent {\bf II. On Analyticity.} In the analyticity argument of Proposition \ref{equality} we wanted to apply the results of  \cite{Hashimoto}. Let us  put
\[
F(x,z,u,u_{j},u_{jk})=z^{\nu}u_{(N+1)(N+1)}+\sum_{j=1}^{N}u_{jj}
\]
 (where subindexes indicate partial derivatives). Then, $\frac{\partial F}{\partial u_{jk}}=z^{\nu}$ for $j=k=N+1$, $\frac{\partial F}{\partial u_{jj}}=1$ for
 each $j=1,\ldots,N$ and $\frac{\partial F}{\partial
u_{jk}}=0$ for $j\neq k$, thus
\[\sum_{j,k=1}^{N+1}\frac{\partial F}{\partial u_{jk}}(x,z,u,u_{j},u_{jk})\zeta_{j}\zeta_{k}=z^{\nu}\zeta_{N+1}^{2}+|\zeta_{1}|^{2}+\ldots+|
\zeta_{N}|^{2}>0
\]
for all $(x,z)\in\R^{N+1}_{+}:=\Omega,$ $\zeta\in \R^{N+1}\setminus{(0,0)}$.
Then the equation
\[
z^{\nu}w_{zz}+\Delta_{x}w=F(x,z,\nabla_{x,z} w,\nabla_{x,z}^{2}w)=0
\]
is elliptic in $\Omega=\R^{N+1}_{+}$. Since a solution $w$ to such equation is $C^{\infty}$ in $\R^{N+1}_{+}$ and the  function $F(x,z,u,u_{j},u_{jk})$ is
analytic in $(z,u_{jj})\in\R_{+}\times\R^{N+1}$ we can apply the main Theorem in \cite{Hashimoto} and conclude that $w(x,z)$ is analytic in $\R^{N+1}_{+}$.

\medskip


\noindent {\bf III. On accretive operators and the semigroup approach}\label{sec.evolapp}
\setcounter{equation}{0}

Let $X$ be a Banach space and $\mathcal{A}:D(\mathcal{A})\subset X\rightarrow X$ a nonlinear operator defined on a suitable subset of $X$. Let us consider the
problem
\begin{equation}\label{eqcauchyabstract.3}
\left\{
\begin{array}
[c]{lll}%
u^{\prime}(t)+\mathcal{A}(u)=f,  &  & t>0,%
\\[4pt]
u(0)=u_{0}\,, &  &
\end{array}
\right.
\end{equation}
where $u_{0}\in X$ and $f\in L^{1}(I;X)$ for some interval $I$ of the real axis.  For a wide class of operators, in particular the ones considered in this
paper,
a very efficient way to approach such problem is to use an implicit time discretization scheme that we describe next. Suppose to be specific that
$I=[0,T]$ (but this can be replaced by any interval $[a,b]$ and the procedure is similar). The method
consists in taking first a partition of the interval, say, $t_k=kh$ for $k=0,1,\ldots n$ and $h=T/n$, and then solving the system of difference relations
\begin{equation}
\frac{u_{h,k}-u_{h,k-1}}{h}+\mathcal{A}(u_{h,k})=f_{k}^{(h)}\label{discrellprob}
\end{equation}
for $k=0,1,\ldots n$, where we pose $u_{h,0}=u_{0}$. The data set $\left\{f_{k}^{(h)}:k=1,\ldots,n\right\}$ is supposed to be a suitable discretization
of
the source term $f$, corresponding to the time discretization we choose. This process is called \emph{implicit time discretization scheme} (ITD for
short)
of the equation
$u^{\prime}(t)+\mathcal{A}(u)=f$. It can be rephrased in the form
\[
u_{h,k}=J_{h}(u_{h,k-1}+hf_{k}^{(h)})\,,
\]
where the operator $J_{\lambda}=(I+\lambda \mathcal{A})^{-1},\,\lambda>0
$ \ is called the \emph{resolvent operator},  $I$ being the identity operator.  Therefore, the application of the method needs the operator $\mathcal{A}$
to
have a well-defined family of resolvents with good properties.  When the ITD is solved, we construct a \emph{discrete approximate solution}
$\left\{u_{h,k}\right\}_{k}$. By piecing together the values  $u_{h,k}$ we form a piecewise constant function, $u_h(t)$,
typically defined through
\begin{equation}
u_{h}(t)=u_{h,k}\quad\text{if }t\in[(k-1)h,kh]\label{interpol}
\end{equation}
(or some other interpolation rule, like linear interpolation). Then the main question consists in verifying if such function $u_{h}$ converges
somehow
as
$h\rightarrow0$ to a solution $u$ (which we hope to be a classical, strong, weak, or other type of solution) to problem \eqref{eqcauchyabstract.3}.
To
this
regard, we first choose a suitable discretization $\left\{f_{k}^{(h)}\right\}$  in time of
the source term $f$, such that the piecewise constant interpolation of this sequence produces a function $f^{(h)}(t)$ (defined by means of
\eqref{interpol}) verifies the property
\[
\|f^{(h)}-f\|_{L^{1}(0,T;X)}\rightarrow0\quad\text{as }h\rightarrow0.
\]
By means of these discrete approximate solutions we introduce the following notion of \emph{mild solution\,}:

\begin{definition}
We say that $u\in C((0,T);X)$ is a mild solution to \eqref{eqcauchyabstract.3} if it is obtained as uniform limit of the approximate solutions $u_{h}$, as
$h\rightarrow0$. The initial data are taken in the sense that $u(t)$ is continuous in $t=0$ and $u(t)\rightarrow u_{0}$ as $t\rightarrow0$.
Besides, we say that $u\in C((0,\infty);X)$ is a mild solution to \eqref{eqcauchyabstract.3} in $[0,\infty)$ if $u$ is a mild solution to the
same problem in any compact subinterval $I\subset [0,\infty)$.
\end{definition}

In order to state a positive existence result, we  need to restrict the class of operators according to the following definitions.

\begin{definition}\label{AcRank}Let $\mathcal{A}:D(\mathcal{A})\subset X\rightarrow X$ be a nonlinear,  possibly unbounded operator. Let
$R_{\lambda}(\mathcal{A})$ be the range of $I+\lambda \mathcal{A}$, a subset of $X$.

\noindent {\rm (i)}  The operator $\mathcal{A}$ is said accretive if for all $\lambda>0$ the map $I+\lambda \mathcal{A}$ is one-to-one onto
$R_{\lambda}(\mathcal{A})\subset X$, and the resolvent operator $J_{\lambda}:R_{\lambda}(\mathcal{A})\rightarrow X$ is a (non-strict) contraction in
the
$X$-norm (i.\,e., a Lipschitz map with Lipschitz norm 1).

 \noindent {\rm  (ii)} We say that $\mathcal{A}$ satisfies the rank condition if \ $R_{\lambda}(\mathcal{A})\supset\overline{D(\mathcal{A})}$ for all
 $\lambda>0$. In particular, the rank condition is satisfied if \ $R_{\lambda}(\mathcal{A})=X$ for all $\lambda>0$; in this case, if $\mathcal{A}$ is accretive,
 we say that $\mathcal{A}$ is
 $m$-accretive.
\end{definition}

We are now ready to state the desired semigroup generation result, that generalizes the classical result of Hille-Yosida (valid in Hilbert spaces and
for
linear $\mathcal{A}$) and the variant by Lumer and Phillips (valid in Banach spaces, still for linear $\mathcal{A}$), and provides the existence and
uniqueness of mild solutions for problems of the type \eqref{eqcauchyabstract.3} in the case $f\equiv0$:

\begin{theorem}[Crandall-Liggett]\label{CrLigg} Suppose that $\mathcal{A}$ is an accretive operator satisfying the rank condition. Then for all data
$u_{0}\in \overline{D(\mathcal{A})}$ the limit
\begin{equation}
S_{t}(\mathcal{A})u_{0}=\lim_{n\rightarrow\infty}(J_{t/n}(\mathcal{A}))^{n}u_{0}.\label{CrLig}
\end{equation}
exists uniformly with respect to $t$, on compact subset of $[0,\infty)$, and  $u(t)=S_{t}(\mathcal{A})u_{0}\in C([0,\infty): X)$. Moreover, the family of
operators $\left\{S_{t}(\mathcal{A})\right\}_{t>0}$ is a strongly continuous semigroup of contractions on $\overline{D(\mathcal{A})}\subset X$.
\end{theorem}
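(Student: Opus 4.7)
The plan is to prove this along the classical Crandall--Liggett lines: first show that the resolvent iterates $J_{\lambda}^{n}u_{0}$ form a Cauchy net as $(n,\lambda)\to(\infty,0)$ with $n\lambda$ fixed, whenever $u_{0}\in D(\mathcal{A})$; then extend existence of the limit to all $u_{0}\in\overline{D(\mathcal{A})}$ by the nonexpansiveness of the iterates; and finally verify the semigroup law, contractivity, and strong continuity.

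Two preliminary estimates drive the whole construction. First, for $u\in D(\mathcal{A})$ one has the one-step bound
\[
\|J_{\lambda}u-u\|\le\lambda\,\|\mathcal{A}u\|,
\]
which follows by observing that $u=J_{\lambda}(u+\lambda\mathcal{A}u)$ and applying the contractivity of $J_{\lambda}$ to the pair $(u,u+\lambda\mathcal{A}u)$. Second, a direct algebraic consequence of the definition of $J_{\lambda}$ is the resolvent interpolation identity
\[
J_{\lambda}u=J_{\mu}\!\Bigl(\tfrac{\mu}{\lambda}\,u+\bigl(1-\tfrac{\mu}{\lambda}\bigr)J_{\lambda}u\Bigr),\qquad 0<\mu\le\lambda,
\]
which mixes resolvents with different parameters as a convex combination. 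The rank condition guarantees that $J_{\lambda}$ is defined on all of $\overline{D(\mathcal{A})}$ for every $\lambda>0$, so the iterates $J_{\lambda}^{n}u_{0}$ are well-posed.

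The core of the argument is a double-induction estimate on $a_{n,k}:=\|J_{\lambda}^{n}u_{0}-J_{\mu}^{k}u_{0}\|$ with $u_{0}\in D(\mathcal{A})$. Inserting the resolvent identity into the first slot and using the nonexpansiveness of $J_{\mu}$ yields the recursion
\[
a_{n,k}\le\tfrac{\mu}{\lambda}\,a_{n-1,k-1}+\bigl(1-\tfrac{\mu}{\lambda}\bigr)a_{n,k-1},
\]
with boundary values $a_{n,0}\le n\lambda\,\|\mathcal{A}u_{0}\|$ and $a_{0,k}\le k\mu\,\|\mathcal{A}u_{0}\|$ obtained by telescoping the one-step bound. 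Induction on $(n,k)$ then produces the fundamental Crandall--Liggett inequality, whose right-hand side is of order $\|\mathcal{A}u_{0}\|\sqrt{(n\lambda-k\mu)^{2}+(n\lambda+k\mu)|\lambda-\mu|}$. Specializing $\lambda=t/n$ and $\mu=t/m$ with $t$ in a compact interval shows that $\{J_{t/n}^{n}u_{0}\}_{n}$ is Cauchy in $X$, uniformly in $t$, and thus defines $S_{t}u_{0}$ as the limit. Identifying the correct quadratic majorant that is stable under the convex-combination recursion, and running the induction cleanly, is the main technical obstacle of the proof.

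The remaining properties then follow routinely. The nonexpansiveness of each iterate $J_{\lambda}^{n}$ passes to the limit, so $\|S_{t}u_{0}-S_{t}v_{0}\|\le\|u_{0}-v_{0}\|$, and a standard density argument together with this contractivity extends both the existence of the limit and the uniform convergence on compact $t$-intervals from $D(\mathcal{A})$ to all $u_{0}\in\overline{D(\mathcal{A})}$. Strong continuity $S_{t}u_{0}\to u_{0}$ as $t\to 0^{+}$ follows from the one-step bound for $u_{0}\in D(\mathcal{A})$ and by density in general. Finally, the semigroup law $S_{t+s}=S_{t}\circ S_{s}$ is obtained by fixing a common step $h$, writing $J_{h}^{N}=J_{h}^{N_{2}}\circ J_{h}^{N_{1}}$ with $N_{1}h\to t$ and $N_{2}h\to s$, and passing to the limit, using the contractivity of the outer factor $J_{h}^{N_{2}}$ to absorb the discrepancy between $J_{h}^{N_{1}}u_{0}$ and $S_{t}u_{0}$. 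This produces the strongly continuous semigroup of contractions on $\overline{D(\mathcal{A})}$ asserted in the theorem.
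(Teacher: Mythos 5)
This theorem is quoted in the paper as classical background (Appendix III) and is not proved there at all --- the authors simply refer the reader to \cite{CL71}, \cite{Cr86}, \cite{Barbu} and Chapter 10 of \cite{vazquezPME}. Your outline reproduces exactly the classical Crandall--Liggett argument from those references (the one-step bound $\|J_\lambda u-u\|\le\lambda\|\mathcal{A}u\|$, the resolvent interpolation identity, the double induction on $a_{n,k}=\|J_\lambda^n u_0-J_\mu^k u_0\|$ with the quadratic majorant, then density, contractivity, strong continuity and the semigroup law), and it is correct as a proof sketch, with only the explicit verification of the induction majorant left to be carried out.
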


Using a popular notation in the linear framework, we could write $S_{t}(\mathcal{A})u_{0}=e^{-t\mathcal{A}}u_{0}$, and because of this analogy formula
\eqref{CrLig} is called the \emph{Crandall-Liggett exponential formula for the nonlinear semigroup generated by} $-\mathcal{A}$. The problem with this very
general and useful result is that the $X$-valued function $u(t)=S_t(\mathcal{A})u_0$ solves the equation only in a mild sense, that is not necessarily
a strong solution or a weak solution. Though it is known that strong solutions are automatically mild, the correspondence between mild and weak solutions is not
always clear. For the FPME this issue has been discussed in detail in \cite{pqrv, pqrv2}.

In addition, the Crandall-Liggett Theorem result can be extended when we consider nontrivial source term $f$, according to the following result
 \begin{theorem}\label{existmildsol}
 Suppose that $\mathcal{A}$ is $m$-accretive. If  $f\in L^{1}(0,\infty;X)$ and $u_{0}\in\overline{D(\mathcal{A})}$. Then the abstract  problem
 \eqref{eqcauchyabstract.3} has a unique mild solution $u$, obtained as limit of the discrete approximate solution $u_{h}$ by ITD scheme described
 above,
 as $h\rightarrow0$:
 \[
 u(t):=\lim_{h\rightarrow0}u_{h}(t)\,,
 \]
 and the limit is uniform in compact subsets of $[0,\infty)$. Moreover, $u\in C([0,\infty);X)$ and for any couple of solutions $u_{1}$, $u_{2}$
 corresponding to source terms $f_1$, $f_2$ we have
 \[
 \|u_{1}(t)-u_{2}(t)\|_{X}\leq\|u_{1}(s)-u_{2}(s)\|_{X}+\int_{s}^{t}\|f_{1}(\tau)-f_{2}(\tau)\|_{X}d\tau
  \]
 for all $0\leq s<t$.
 \end{theorem}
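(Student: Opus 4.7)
The approach extends the unforced Crandall--Liggett Theorem \ref{CrLigg} to accommodate the source term. The plan has three stages: carry out the ITD scheme (well-posed thanks to $m$-accretivity), establish a master a priori estimate \`a la Kobayashi--B\'enilan comparing two forced discrete trajectories built from possibly different time steps and data, and finally pass to the limit. For the ITD scheme, discretize the source by
$$
f_k^{(h)}:=\frac{1}{h}\int_{(k-1)h}^{kh}f(\tau)\,d\tau,
$$
so that $f^{(h)}\to f$ in $L^{1}(0,\infty;X)$ and $\sum_{k\ge1}h\,\|f_k^{(h)}\|_X\le\|f\|_{L^1(0,\infty;X)}$. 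Since $\mathcal A$ is $m$-accretive, $J_h=(I+h\mathcal A)^{-1}$ is a non-expansive map defined on all of $X$, so the iterates $u_{h,k}=J_h\bigl(u_{h,k-1}+hf_k^{(h)}\bigr)$ with $u_{h,0}=u_0$ are well defined for every $k$, and so is the step function $u_h(t)$ from \eqref{interpol}.

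The heart of the proof is a forced version of the Kobayashi--B\'enilan inequality. For two trajectories $\{u_{h,k}\}$ and $\{\widehat u_{\widehat h,\ell}\}$ coming from data $u_0,\widehat u_0$ and sources $\{f_j^{(h)}\}$, $\{\widehat f_i^{(\widehat h)}\}$, I would prove by double induction on $(k,\ell)$ an estimate of the shape
$$
\|u_{h,k}-\widehat u_{\widehat h,\ell}\|_X\le \|u_0-\widehat u_0\|_X+\sum_{j=1}^{k}\sum_{i=1}^{\ell}\alpha_{j,i}^{(h,\widehat h)}\,\|f_j^{(h)}-\widehat f_i^{(\widehat h)}\|_X+R_{k,\ell}^{(h,\widehat h)},
$$
where the remainder $R$ controls the incompatibility $|kh-\ell\widehat h|+|h-\widehat h|$ of the two partitions. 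The induction step rewrites the difference of two resolved expressions as a convex combination via the resolvent identity
$$
J_h x=J_{\widehat h}\!\left(\tfrac{\widehat h}{h}x+\bigl(1-\tfrac{\widehat h}{h}\bigr)J_h x\right),
$$
and then exploits non-expansivity of $J_{\widehat h}$ on the outer resolvent. Optimizing the weights $\alpha_{j,i}^{(h,\widehat h)}$ by a discrete Stirling-type estimate delivers a remainder of order $O(\sqrt{|h-\widehat h|+|kh-t|})$ when evaluated at a fixed time $t$. The careful bookkeeping of how the forcing contributions propagate through the induction is the main technical obstacle.

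Two consequences close the proof. First, taking $\widehat h=h'$ and the same initial datum in the master inequality, and using $f^{(h)}\to f$ in $L^1$, shows that $\{u_h\}$ is Cauchy in $C([0,T];X)$ for every $T>0$; hence there exists $u\in C([0,\infty);X)$ with $u_h\to u$ uniformly on compact subsets of $[0,\infty)$, and this $u$ is by definition the mild solution of \eqref{eqcauchyabstract.3}, with $u(0)=u_0$ by construction. Second, applying the master inequality with a common partition to two trajectories associated with $(u_0^{(1)},f_1)$ and $(u_0^{(2)},f_2)$ and letting $h\to 0$ yields the contraction inequality with $s=0$; rerunning the same argument starting from the discrete time closest to any $s\in[0,t]$, with the updated initial values $u_{i,h,[s/h]}$, replaces $0$ by $s$ and produces
$$
\|u_1(t)-u_2(t)\|_X\le\|u_1(s)-u_2(s)\|_X+\int_s^t\|f_1(\tau)-f_2(\tau)\|_X\,d\tau.
$$
Uniqueness of the mild solution is then the special case $u_0^{(1)}=u_0^{(2)}$, $f_1=f_2$.
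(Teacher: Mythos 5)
The paper does not actually prove this theorem: it is quoted as a classical result of nonlinear semigroup theory, with the proof deferred to the literature (Crandall--Liggett \cite{CL71}, Crandall \cite{Cr86}, Barbu \cite{Barbu}, and Chapter 10 of \cite{vazquezPME}). Your route --- averaging discretization of $f$, well-posedness of the ITD iterates via $m$-accretivity, a forced Kobayashi--B\'enilan double-induction estimate comparing trajectories with different step sizes, and passage to the limit --- is precisely the route taken in those references, so the strategy is the right one. Your preliminary observations are all correct: the resolvent identity $J_h x=J_{\widehat h}\bigl(\tfrac{\widehat h}{h}x+(1-\tfrac{\widehat h}{h})J_h x\bigr)$ holds (and gives a convex combination when $\widehat h\le h$), the averaged sources satisfy $f^{(h)}\to f$ in $L^1$ with $\sum_k h\|f_k^{(h)}\|_X\le\|f\|_{L^1}$, and $m$-accretivity makes every $J_h$ an everywhere-defined contraction.

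The gap is that the entire technical substance of the theorem is contained in the ``master inequality'' that you state only in shape and do not prove. The weights $\alpha_{j,i}^{(h,\widehat h)}$, the remainder $R_{k,\ell}^{(h,\widehat h)}$, the double induction that propagates the forcing terms, and the ``discrete Stirling-type'' optimization yielding the $O\bigl(\sqrt{|h-\widehat h|+|kh-t|}\bigr)$ rate are all asserted rather than derived; this is exactly where the difficulty of the Crandall--Liggett/Kobayashi argument lives, and without it you have not established that $\{u_h\}$ is Cauchy in $C([0,T];X)$, which is the existence statement. By contrast, the final contraction inequality is easier than you make it: once convergence is known, comparing two trajectories on a \emph{common} partition gives, by non-expansivity of $J_h$, the telescoping bound $\|u_{1,h,k}-u_{2,h,k}\|_X\le\|u_{1,h,j}-u_{2,h,j}\|_X+\sum_{i=j+1}^{k}h\|f_{1,i}^{(h)}-f_{2,i}^{(h)}\|_X$, and letting $h\to0$ with $jh\to s$, $kh\to t$ yields the stated inequality directly --- no restarting of the scheme at time $s$ is needed. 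So: right plan, correct easy steps, but the hard estimate is named rather than proved.
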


There is a wide literature on these topics, starting with the seminal paper by Crandall and Liggett \cite{CL71}, see also \cite{Cr86} and the general
reference \cite{Barbu}.  These notes are based on Chapter 10 of the book \cite{vazquezPME}, cf. the references therein.  The last formula we have
mentioned introduces the correct concept of uniqueness for the constructed class of solutions. Characterizing the  uniqueness of different concepts of solution
is a difficult topic already discussed (with positive results) by B\'enilan in his thesis \cite{BeTh}.


\section{Comments and extensions}

\noindent $\bullet$ We have only proved results on  parabolic comparison based on symmetrization for the linear case. The elliptic results can be applied to
nonlinear equations but still have
severe restrictions. It will be interesting to know how much is true for nonlinear functions $B$ and $A$ in the respective equations. This question was partially
addressed and solved for the spectral fractional Laplacian in \cite{VazVol1}, and the limitations to the generality of the results were also shown to be
necessary, the symmetrization result was false for concave $B$ or convex $A$ of power type.

More generally, we would like to know if there is an approach that ensures comparison results of some symmetrization type valid for quite general nonlinearities,
as it happens in the non-fradtional case, cf. \cite{VANS05}.

\medskip

\noindent $\bullet$ {\bf The variable coefficient case.}
As a future direction, we are interested in the following problem
\begin{equation}\label{diva}
\left \{
\begin{array}{c}
\mbox{div}(\,\,
y^{1-2s}B(x)
\nabla w)=0 \quad \mbox{in}\,\, \mathcal H^+\\
w=0 \quad \mbox{in}\,\, \mathbb R^{N} \backslash \Omega \\
-y^{1-2s}\partial_y w|_{y=0}= f,
\end{array} \right .
\end{equation}
where
$$
B(x)=\begin{pmatrix}
A(x) & 0\\
0 & 1
\end{pmatrix} .
$$
Here the matrix $A(x)$ is supposed to be $W^{1,\infty}(\R^N)$ and uniformly elliptic with lower constant $\Lambda >0$.

It is a by now well-known fact that the spectral powers of $\mbox{div}(A(x)\nabla)$, i.e. $\Big ( \mbox{div}(A(x)\nabla) \Big )^s$ for $s \in (0,1)$ in a bounded
domain $\Omega $ can be described as the Dirichlet-to-Neumann operator of a suitable extension in a cylinder $\mathcal C= \Omega \times \R^+$ (see for instance
\cite{CaffaStinga} for a detailed account). The previous problem \eqref{diva} is a variant of this extension but in the whole $\R^N$. The Dirichlet-to-Neumann
operator in this case is {\sl not} explicitly identified; however, we believe it is a natural possible extension of the problem we considered in this paper. The
idea here is to develop the techniques produced in the present paper to handle variable coefficients, having in mind an isoperimetric inequality. Indeed, a FKI
was proven in terms of the first eigenvalue by means of
$$
\lambda_1(\Omega) \geq \Lambda \lambda_1(\Omega^\#).
$$
The aim here is to prove such a result for the following problem: let $\mathcal L_s$ be the Dirichlet-to-Neumann operator associated to the problem \eqref{diva}
defined on $\Omega$. It is obvious that $\mathcal L_s$ has discrete spectrum $\left \{ \lambda_{k,s} \right \}_{k=1}^\infty$. It is not clear how to use a
variational approach to deal with this operator since it does not seem obvious that this operator is associated to a norm in $\R^N$ satisfying a P\'olya-Sz\"ego
inequality. However, the parabolic approach developed in the present paper seems promising.

\


\noindent {\large\bf Acknowledgments}

\noindent   Y.S. is supported by ANR projects ``HAB'' and ``NONLOCAL''. J.L.V. partially supported by the Spanish Research Project MTM2011-24696. B.V. partially supported by the INDAM-GNAMPA project 2014 ``\textit{Analisi qualitativa di soluzioni di equazioni ellittiche e di
evoluzione}'' (ITALY).
\normalcolor

\


{\small


}

\

2000 \textit{Mathematics Subject Classification.}
35B45,  
35R11,   	
35J61, 
35K55. 


%
\textit{Keywords and phrases.} Symmetrization, fractional Laplacian,
 nonlocal elliptic and parabolic equations, Faber-Krahn inequality.

\end{document}